\newtheorem{thm}{Theorem}
\newtheorem{coro}{Corollary}
\newtheorem{lem}{Lemma}
\newtheorem{prop}{Proposition}
\theoremstyle{definition}
\newtheorem{defi}{Definition}
\newtheorem{rem}{Remark}
\title{On the Borel complexity of the space of left-orderings of nilpotent groups}
\date{}
\author{Emir Molina Taucán}
\begin{document}

\maketitle

\begin{center}
{\bf Abstract}
\end{center}
\noindent We give the first examples of nonabelian left-orderable groups such that the conjugacy orbit equivalence relation on its space of orders has infinity orbits, yet it is smooth in the Borel sense.  
The examples are all nilpotent groups and we provide a sufficient condition so that the space of orders of a nilpotent group has a smooth conjugacy orbit relation.
We also show with different examples that nilpotence is not a sufficient condition for smoothness.\\


\noindent\textbf{2020 Mathematics Subject Classification:} 03E15, 06F15, 20F18, 20F60. \\

\textbf{\centering Dedicated to the memory of Professors Marius M\u antiou and Manuel Pinto}

\section{Introduction}

Consider two groups $G$ and $H$ acting by homeomorphism on spaces $X$ and $Y$, respectively. Does there exist a criterion to compare both actions? Does there exist any form to say that one action is ``more complex than'' another? One answer to these questions is found in the area of Descriptive Set Theory 
thanks to the concepts of \textit{Borel reductions} and \textit{Borel complexity} \cite{Kanovei,Gao}.
To introduce these concepts, we need to restrict our attention to a particular kind of space.

Let $X$ be a set provided by a $\sigma$-algebra $\mathcal{B}$. The pair $(X,\mathcal{B})$ is called \textit{standard Borel space} if $\mathcal{B}$ is generated by a Polish topology $\mathcal{T}$. Recall that a Polish space is a separable, completely metrizable topological space. 
An equivalence relation $E$ on a standard Borel space $X$ is called Borel if $E\subseteq X\times X$ is Borel as a subset. 
We also say that $E$ is countable when every equivalence class is countable.
As an example of this concept, we could consider the relation induced by an action by continuous transformations $G\curvearrowright X$ of a countable group $G$ on a Polish space $X$.
In this case, we say that $X$ is a $G$-Polish space and denote this relation by $E_{G}^X$. 
The reader can find more interesting examples of Borel equivalence relations in Kechris' book \cite{NKechris}.



Consider now two Polish spaces $X$ and $Y$ with some Borel equivalence relations $E$ and $F$, respectively. We say that a Borel function $f:(X,E)\rightarrow (Y,F)$ is a {\it Borel reduction} if $xEy\Longleftrightarrow  f(x)F f(y)$. We then say that $E$ is Borel reducible to $F$, which is denoted by $E\leq_B F$, if there exists a reduction from $E$ to $F$. It is not hard to see that the relation $\leq_B$ induces a partial order in the class of standard Borel spaces provided of a Borel equivalence relation.
Therefore, it is possible to classify every equivalence relation in a ``hierarchy of grades'' of complexity. This concept is known as {\it Borel complexity}. 

The simplest Borel equivalence relations are called \textit{smooth}. We say that a relation $E$ is smooth if the quotient space $X/E$ is also a standard Borel space. Furthermore, the simplest nonsmooth relations are called \textit{hyperfinite}. By definition, a relation $E$ is hyperfinite if there exist finite equivalence relations $E_1\subseteq E_2 \subseteq \dots$ such that $E=\bigcup_{n=1}E_n$.
It is known that every smooth Borel equivalence relation is also hyperfinite \cite{NKechris}. Finally, the most complicated relations are called \textit{universal}. In this case, a relation is universal if every other countable equivalence relation is reduced to it. 
See \cite[Chapter 6]{NKechris} for more details about the different complexity grades.

The problem driving this work is to classify left-orderable groups with respect to their complexity grades. For completeness, we will give a little background on orderable groups.

A group $G$ is called left-orderable if it admits a linear order $\prec$ invariant by left multiplication. Such an order is called left-invariant. Moreover, we say that a left-invariant order is bi-invariant if it is also right-invariant. It must be remarked that every left-orderable group is torsion-free \cite[Proposition 1.3]{ClayRolfsen}.

An elementary result establishes that there exists a correspondence between left-invariant orders on a group and positive cones of it, where a {\it positive cone} $P$ of $G$ is a semigroup such that $G=P\ \amalg\ P^{-1}\ \amalg \{1\}.$ Here and throughout this work, $\amalg$ denotes a disjoint union. Note that a positive cone $P$ corresponds to a bi-order if and only if $g^{-1}Pg=P$ for all $g$ in $G$. 

Due to the previous correspondence, we can think of the space of all orders on $G$, denoted by $\mathcal{LO}(G)$, as a subspace of the space $\{0,1\}^{G}$. 
If $\{0,1\}^{G}$ is provided by prodiscrete topology, 
then $\mathcal{LO}(G)$ corresponds to a totally disconnected, compact, Hausdorff topological space \cite[Section 1.5]{ClayRolfsen}. 
Moreover, if $G$ is a countable group, then $\mathcal{LO}(G)$ is also a Polish space. 

From the topology on $\mathcal{LO}(G)$, we see that for every positive cone $P$ 
a basic neighborhood of $P$ in $\mathcal{LO}(G)$ is given $U=\{\ Q\in \mathcal{LO}(G)\ : \ t_1,\dots,t_p\in Q\ \}$, where the (finitely many) elements $t_1,\dots, t_p$ belongs to $P$.

For more information on left-orderable groups, the reader can see \cite{ClayRolfsen} and \cite{GOD}.

An important fact about any orderable group $G$ is that it acts by homeomorphisms on $\mathcal{LO}(G)$ by 
\begin{align*}
    \mathcal{LO}(G)\times G&\longrightarrow \mathcal{LO}(G),\\(P,g)&\longrightarrow g^{-1}Pg.
\end{align*} 
If $G$ is a countable group, the previous action allows us to think of $\mathcal{LO}(G)$ as a $G$-Polish space and think of its induced orbit relation, which we denote by $E_{lo}(G)$, as a countable Borel equivalence relation.

Originally, Deroin, Navas and Rivas proposed the following question in their book \textit{Groups, Orders and Dynamics} \cite{GOD}: {\it ``Can the space of orbits $\mathcal{LO}(\Gamma)/\Gamma$ be a nonstandard Borelian space for a left-orderable group $\Gamma$?''}.
This question has a positive answer and has been developed by Adam Clay and Filippo Calderoni through their three articles \cite{ClayCalderoni1}, \cite{ClayCalderoni2} {\color{violet} and} \cite{ClayCalderoni3}. Thanks to their work, it is known that for free groups $\mathbb{F}_n$, noncyclic knot groups, Baumslag-Solitar $BS(1,n)$ groups (where $n\geq 2$) and Thompson's group $F$  the space of orbits is, in fact, a nonstandard Borelian space. Moreover, they have proved that $E_{lo}(\mathbb{F}_n)$ is a universal countable Borel equivalence relation, for $n\geq 2$. However, at the time of this writing, the only known examples of groups with a smooth relation $E_{lo}(G)$ are abelian groups, or groups such that their space of order has only bounded-finite orbits.
Thus, the motivation of this work is to provide the first family of examples of left-orderable groups with infinite orbits in $\mathcal{LO}(G)$, yet where $E_{lo}(G)$ is a smooth equivalence relation. 

All the examples presented in this work are nilpotent groups. For it, we are interested in determining the Borel complexity grade of any nilpotent group. A recent result shown by Schneider and Seward asserts that any orbit equivalence relation induced by a Borel action of a locally nilpotent group on a standard Borel space is hyperfinite \cite[Theorem 1.1]{SS}. 
This implies that $E_{lo(G)}$ must be a hyperfinite relation when $G$ is nilpotent left-orderable group. Therefore, the problem is reduced to determining whether $E_{lo}(G)$ is smooth or not. Our first main result is a sufficient condition for smoothness on $E_{lo}(G)$ for finitely generated nilpotent groups $G$.

\begin{thm}
\label{thmsmooth}
    Let $G$ be a left-orderable finitely generated nilpotent group. If $[G,G]$ is cyclic and $[G,G]\subseteq Z(G)$, then the relation $E_{lo}(G)$ is smooth.
\end{thm}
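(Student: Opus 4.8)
The plan is to reduce the nonabelian conjugation action to an affine action of a finitely generated free abelian group, and then exhibit a Borel selector. First I would dispose of the degenerate case $[G,G]=1$: then $G$ is abelian, the conjugation action on $\mathcal{LO}(G)$ is trivial, $E_{lo}(G)$ is equality, hence smooth. So I assume $C:=[G,G]\cong\mathbb{Z}$ (torsion-free since $G$ is), generated by a central element $z$, and write $A:=G/C$, a finitely generated abelian group. Because $G$ has nilpotency class $2$ and all commutators are central, the commutator descends to an alternating bilinear pairing $\omega\colon A\times A\to\mathbb{Z}$ with $[h,g]=z^{\omega(\bar h,\bar g)}$, and a direct computation gives the conjugation formula $ghg^{-1}=h\,z^{-\omega(\bar h,\bar g)}$. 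The key consequence is that the $G$-action on $\mathcal{LO}(G)$ factors through the abelian group of functionals $\Lambda:=\{\omega(-,\bar g):\bar g\in A\}\subseteq\mathrm{Hom}(A,\mathbb{Z})$.

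Next I would set up a coordinate system on orders. Fix a Borel section $s\colon A\to G$ (a normal form) and split $\mathcal{LO}(G)$ into the two clopen, invariant pieces according to whether $z\succ 1$ or $z\prec 1$; these are interchanged by $P\mapsto P^{-1}$, so it suffices to treat $z\succ 1$. Since $z$ is central and positive, each coset $s(a)C=\{s(a)z^n\}_n$ is strictly increasing in $n$, so for $a\neq 0$ there is a threshold $\tau_P(a)\in\mathbb{Z}\cup\{\pm\infty\}$ with $s(a)z^n\in P\iff n\ge\tau_P(a)$; setting $\tau_P(0)=1$, the map $P\mapsto\tau_P$ is a Borel bijection between the $z\succ 1$ part of $\mathcal{LO}(G)$ and the set of admissible threshold functions. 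Feeding the conjugation formula into this dictionary turns the action into an affine shear, $\tau_{g^{-1}Pg}=\tau_P+\omega(-,\bar g)$, so the orbit of $P$ is exactly $\{\tau_P+\ell:\ell\in\Lambda\}$, and $E_{lo}(G)$ becomes the orbit relation of the translation action of the finitely generated free abelian group $\Lambda$ on the space of threshold functions.

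The decisive invariant is the set $\overline H:=\{a\in A:\tau_P(a)\in\mathbb{Z}\}$. I would prove that $\overline H=\widehat C/C$, where $\widehat C$ is the convex hull of $C$ in $(G,P)$: indeed $\tau_P(a)$ is finite precisely when $s(a)$ is comparable with $C$, i.e. lies in $\widehat C$, so $\overline H$ is a subgroup of $A$. Since a finite shear cannot turn a finite threshold into $\pm\infty$ or vice versa, both $\overline H$ and the $\{+\infty\}/\{-\infty\}$ pattern of $\tau_P$ off $\overline H$ are orbit-invariant, and $\Lambda$ acts on a threshold function only through its restriction to $\overline H$. Restricting to finitely many generators $b_1,\dots,b_m$ of the subgroup $\overline H$ embeds the effective part of $\Lambda$ as a subgroup $\Lambda''\le\mathbb{Z}^m$, so that the remaining orbit data is captured by the vector $(\tau_P(b_1),\dots,\tau_P(b_m))\in\mathbb{Z}^m$ modulo $\Lambda''$.

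Finally I would build the Borel selector. Using canonical coset representatives for the finitely generated abelian group $\mathbb{Z}^m/\Lambda''$ (a Borel fundamental domain, which exists by Smith normal form), I select the unique $\ell^*\in\Lambda$ carrying $(\tau_P(b_j))_j$ into the fundamental domain and output the cone $P^*$ with $\tau_{P^*}=\tau_P+\ell^*$; this $P^*$ lies in the orbit of $P$ because $\ell^*\in\Lambda$ is realized by a genuine conjugator, and a short check shows $P\mapsto P^*$ is Borel and constant on orbits, witnessing smoothness. The main obstacle is the structural reduction itself, not the final descriptive step: establishing the order--threshold dictionary, verifying that nonabelian conjugation collapses to the affine shear by $\omega$, and proving that $\overline H=\widehat C/C$ is an orbit-invariant subgroup. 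Care is also needed to keep every step uniformly Borel---in particular choosing generators of $\overline H$ as a function of the (countably many possible) subgroups $\overline H\le A$ and checking that $\tau_P$ depends measurably on $P$---but once the action has been linearized, smoothness follows from the standard existence of Borel transversals for translation actions of finitely generated free abelian groups.
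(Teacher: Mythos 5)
Your proposal is correct in its essentials, but it takes a genuinely different route from the paper. The paper works through Conrad's theory and Osin's dynamical criterion: for each order it takes the canonical convex series, shows (after enlarging $[G,G]$ to a central cyclic $\langle\, g_0\,\rangle$ with torsion-free quotient) that the series reaches an abelian convex subgroup $C_m\cong\mathbb{Z}^n$ on which conjugation acts through the matrix group $L_n$, proves by induction on $n$ via a convex-cone argument that $L_n\curvearrowright\mathcal{LO}(\mathbb{Z}^n)$ has no condensed points (Proposition \ref{matrixact}), and concludes via Lemma \ref{dico} and Proposition \ref{condpoint}. You instead linearize the conjugation action on all of $\mathcal{LO}(G)$ at once: the commutator pairing turns conjugation into the shear $\tau_{g^{-1}Pg}=\tau_P+\omega(-,\bar g)$ on threshold functions, and you produce an explicit Borel transversal for the resulting translation action of the finitely generated free abelian group $\Lambda$. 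Your route avoids Conrad's theorem, Osin's criterion and the inductive matrix proposition entirely (it even dispenses with the paper's preliminary Claim, since your $A=G/[G,G]$ is allowed to have torsion), and it yields more information, namely an explicit classification of orbits by the triple consisting of $\overline H$, the $\pm\infty$-pattern off $\overline H$, and $\tau_P|_{\overline H}$ modulo $\Lambda|_{\overline H}$. What the paper's route buys is reusable machinery: Proposition \ref{matrixact} is of independent interest (contrast Poulin's result that the full $GL_n(\mathbb{Z})$-action is never smooth for $n\geq 2$), and Lemma \ref{dico} is the tool that later produces the \emph{non}-smooth nilpotent examples. Three points in your write-up need tightening. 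First, that $\overline H$ is a subgroup amounts to checking that the set of elements bounded between powers of $z$ is a subgroup; this uses centrality of $z$ (convex hulls of subgroups need not be subgroups in general left-ordered groups), so it should be spelled out. Second, $\ell^*$ is not unique in $\Lambda$, since the restriction map $\Lambda\to\mathrm{Hom}(\overline H,\mathbb{Z})$ can have nontrivial kernel; uniqueness holds in $\Lambda|_{\overline H}$, and different lifts give the same cone $P^*$ precisely because the thresholds off $\overline H$ are infinite --- this observation is what makes $P^*$ well defined, so it should be stated. Third, saying the orbit data ``is captured by'' $(\tau_P(b_j))_j$ modulo $\Lambda''$ overstates the truth: that vector is not a complete invariant, since $\tau_P|_{\overline H}$ is not a homomorphism and is not determined by its values on generators. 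Fortunately your selector never needs this; it only needs that each orbit contains exactly one representative whose generator-vector lies in the fundamental domain, which holds because $\Lambda|_{\overline H}\to\Lambda''$ is an isomorphism and $\Lambda''$ acts freely on $\mathbb{Z}^m$.
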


A family of examples satisfying the hypothesis of Theorem \ref{thmsmooth} are the generalized discrete Heisenberg groups $\mathcal{H}_{2n+1}$, which are defined as the matrix group
$$\mathcal{H}_{2n+1}:=\left \{\ \left( \begin{array}{ccc}
1 & \vec{a} & c\\
\vec{0}^t & I_n & \vec{b}^t\\
0 & \vec{0} & 1  
\end{array} \right )\ :\ \vec{a},\vec{b}\in \mathbb{Z}^n,\ c\in\mathbb{Z}\ \right \}$$
for all $n\geq1.$ Thus, we get 

\begin{coro}
    The relation $E_{lo}(\mathcal{H}_{2n+1})$ is smooth, for all $n\geq1.$
\end{coro}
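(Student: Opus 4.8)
The plan is to verify that each generalized discrete Heisenberg group $\mathcal{H}_{2n+1}$ satisfies the four hypotheses of Theorem \ref{thmsmooth}, after which the conclusion is immediate. Writing a typical element as a triple $(\vec{a},\vec{b},c)$ with $\vec{a},\vec{b}\in\mathbb{Z}^n$ and $c\in\mathbb{Z}$, a direct multiplication of the corresponding matrices yields the group law
\[
(\vec{a}_1,\vec{b}_1,c_1)\,(\vec{a}_2,\vec{b}_2,c_2)=\bigl(\vec{a}_1+\vec{a}_2,\ \vec{b}_1+\vec{b}_2,\ c_1+c_2+\langle \vec{a}_1,\vec{b}_2\rangle\bigr),
\]
where $\langle\cdot,\cdot\rangle$ denotes the standard inner product on $\mathbb{Z}^n$. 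From this formula one reads off at once that the subgroup $Z=\{(\vec{0},\vec{0},c):c\in\mathbb{Z}\}$ is central and isomorphic to $\mathbb{Z}$, and that $\mathcal{H}_{2n+1}$ is generated by the finitely many elementary triples $(\vec{e}_i,\vec{0},0)$ and $(\vec{0},\vec{e}_j,0)$, so the group is finitely generated.

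Next I would compute the commutator subgroup. Since $(\vec{a}_1,\vec{b}_1,c_1)(\vec{a}_2,\vec{b}_2,c_2)$ and $(\vec{a}_2,\vec{b}_2,c_2)(\vec{a}_1,\vec{b}_1,c_1)$ agree in their first two coordinates and differ in the last only by $\langle\vec{a}_1,\vec{b}_2\rangle-\langle\vec{a}_2,\vec{b}_1\rangle$, and since $Z$ is central, every commutator takes the form $(\vec{0},\vec{0},\langle\vec{a}_1,\vec{b}_2\rangle-\langle\vec{a}_2,\vec{b}_1\rangle)\in Z$. Choosing $\vec{a}_1=\vec{b}_2=\vec{e}_i$ and $\vec{a}_2=\vec{b}_1=\vec{0}$ produces the generator $(\vec{0},\vec{0},1)$, so in fact $[\mathcal{H}_{2n+1},\mathcal{H}_{2n+1}]=Z\cong\mathbb{Z}$. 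This settles simultaneously the requirement that the commutator subgroup be cyclic and that it be contained in $Z(\mathcal{H}_{2n+1})$; moreover, because $[\mathcal{H}_{2n+1},\mathcal{H}_{2n+1}]$ is central, the group is nilpotent of class two.

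It remains to record that $\mathcal{H}_{2n+1}$ is left-orderable. This follows from the classical fact that every torsion-free nilpotent group is bi-orderable: the group is unitriangular and hence torsion-free, and the central series $\{e\}\trianglelefteq Z\trianglelefteq\mathcal{H}_{2n+1}$ has torsion-free abelian quotients $Z\cong\mathbb{Z}$ and $\mathcal{H}_{2n+1}/Z\cong\mathbb{Z}^{2n}$, through which a left-order lifts in the standard way. With all four hypotheses of Theorem \ref{thmsmooth} verified, the smoothness of $E_{lo}(\mathcal{H}_{2n+1})$ follows directly. The only substantive step is the commutator computation; no genuine obstacle arises here, since the corollary is a clean instance of the theorem and the verification of its hypotheses is entirely elementary.
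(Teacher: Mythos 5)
Your proof is correct and takes essentially the same approach as the paper: both verify the hypotheses of Theorem \ref{thmsmooth} by showing that the commutator subgroup of $\mathcal{H}_{2n+1}$ is the cyclic central subgroup generated by $z=(\vec{0},\vec{0},1)$, and by establishing left-orderability through a lexicographic extension. The only cosmetic difference is that the paper argues with the generating set $x_i,y_i,z$ and the relations $[x_i,y_i]=z$, together with the sequence $1\to\mathbb{Z}^{n+1}\to\mathcal{H}_{2n+1}\to\mathbb{Z}^{n}\to 1$, whereas you work in triple coordinates with the explicit group law and the central extension $1\to Z\to\mathcal{H}_{2n+1}\to\mathbb{Z}^{2n}\to 1$.
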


It should be noted that the space $\mathcal{LO}(\mathcal{H}_{2n+1})$ contains orders with infinite conjugacy {\color{violet}orbits} (see Section 2.3). 

Given Theorem \ref{thmsmooth}, one could guess that $E_{lo}(G)$ is smooth for all left-orderable nilpotent groups. However, this is not true{\color{violet},} as shown by our second main result.

\begin{thm}
\label{nosmooth}
    There exists a nilpotent left-orderable group $G$ such that $E_{lo}(G)$ is not smooth.
\end{thm}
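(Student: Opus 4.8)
The plan is to prove non-smoothness by producing a Borel reduction of the eventual-equality relation $E_0$ on $2^{\mathbb{N}}$ (defined by $x\,E_0\,y$ iff $x_n=y_n$ for all but finitely many $n$) into $E_{lo}(G)$ for a suitable $G$. By the Glimm--Effros dichotomy of Harrington--Kechris--Louveau, a Borel equivalence relation $E$ is smooth if and only if $E_0\not\le_B E$; since $E_0$ itself is not smooth, exhibiting a continuous embedding $E_0\le_B E_{lo}(G)$ is exactly what is needed. The group $G$ must of course violate the hypotheses of Theorem~\ref{thmsmooth}, and I would take $G$ to be a finitely generated torsion-free nilpotent group in which the commutator subgroup $N:=[G,G]$ is central but not cyclic (for concreteness the free $2$-step nilpotent group on three generators, or the direct product $\mathcal{H}_3\times\mathcal{H}_3$), so that $N\cong\mathbb{Z}^k$ with $k\ge 2$ and hence $\mathcal{LO}(N)$ is an infinite, totally disconnected space leaving room for nontrivial dynamics.

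The first technical step is to record how conjugation moves an order. For $g,h\in G$ one has $ghg^{-1}=h\cdot[g,h]$ with $[g,h]\in N$ central, so for any positive cone $P$ the conjugate cone satisfies $h\in g^{-1}Pg \iff h\,[g,h]\in P$. The point is that for orders in which $N$ is \emph{not} convex, multiplying a ``large'' element $h$ by the comparatively large central element $[g,h]$ can flip its sign, so conjugation acts nontrivially on such orders even though it fixes $N$ pointwise. I would isolate a closed, conjugation-invariant set $C\subseteq\mathcal{LO}(G)$ of such non-convex orders, parametrized continuously by $2^{\mathbb{N}}$, on which the perturbation $P\mapsto g^{-1}Pg$ is governed by the alternating bilinear commutator form $\omega\colon (G/N)\times(G/N)\to N$.

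The heart of the argument is to build an explicit continuous injection $\Phi\colon 2^{\mathbb{N}}\to C$ and to show that the $G$-conjugation action on $C$ realizes $E_0$, i.e.\ that $\Phi(x)$ and $\Phi(y)$ are conjugate orders precisely when $x\,E_0\,y$. Concretely, I would engineer $\Phi$ so that the coordinates of $x$ record the signs, in the order $\Phi(x)$, of an infinite sequence of elements $h_0,h_1,\dots$ lying successively deeper in the order relative to $N$, arranged so that conjugation by the generators of $G$ acts on these signs like the digit carries of a dyadic adding machine (odometer). Finitely many conjugations then change only finitely many coordinates, which gives the implication $x\,E_0\,y\Rightarrow\Phi(x),\Phi(y)$ conjugate, while a compactness and recurrence argument should rule out conjugacies between orders differing in infinitely many coordinates, giving the converse.

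The main obstacle I expect is precisely this last equivalence: proving that conjugacy of $\Phi(x)$ and $\Phi(y)$ is \emph{exactly} $E_0$ and not some coarser or finer relation. The constructive half, $x\,E_0\,y\Rightarrow$ conjugate, is the easier direction; the hard direction is to show that two orders in $C$ agreeing on only finitely many coordinates cannot be conjugate, which requires a careful analysis of the finitely generated group of unipotent perturbations that conjugation induces on the orders extending a fixed order of $N$, together with a proof that its orbits cannot identify tails that differ infinitely often. Once this faithful reduction $E_0\le_B E_{lo}(G)$ is in place, the Glimm--Effros dichotomy yields that $E_{lo}(G)$ is not smooth, establishing the theorem and showing that nilpotence alone does not guarantee smoothness.
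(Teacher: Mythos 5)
Your high-level strategy is legitimate: by the Harrington--Kechris--Louveau dichotomy, a Borel equivalence relation is non-smooth if and only if $E_0$ Borel-reduces to it, so a reduction $E_0\le_B E_{lo}(G)$ would indeed prove the theorem, and your candidate groups can be made to work (though for $\mathcal{H}_3\times\mathcal{H}_3$ this is not automatic: with respect to the natural generators, conjugation perturbs each coordinate of a defining functional only by a \emph{cyclic} group of translations, and one must first pass to an order whose top convex subgroup mixes the two factors before the two central directions interact). The genuine gap is that the proof's entire content is missing: you never construct the map $\Phi$, never verify either direction of the reduction, and you yourself flag the backward implication (conjugate $\Rightarrow E_0$-equivalent) as an unresolved obstacle. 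What you have is a plan whose hardest step is left open. Moreover, the dynamical picture you propose is not the one conjugation actually produces. In a $2$-step nilpotent group, conjugation moves a Conradian order by translating the Conrad homomorphisms of its convex jumps --- for $h$ in an abelian convex normal subgroup, $f(g^{-1}hg)=f(h)+f([h,g])$ with $[h,g]$ central --- so the induced dynamics on the relevant families of orders is translation of a real parameter by a countable subgroup of $\mathbb{R}$, an irrational-rotation type action, not a digit-carry/odometer action; nothing in your outline makes literal odometer dynamics inside $\mathcal{LO}(G)$ plausible.

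The paper's proof shows that your ``hard direction'' can be bypassed entirely. By Osin's criterion (Proposition \ref{condpoint}), non-smoothness follows from exhibiting a single \emph{condensed point}, i.e.\ one order that is a limit of its own conjugates --- which is only the ``easy direction'' of your intended reduction. Concretely, the paper takes the group $\mathcal{N}=\langle\, b_1,b_2,a_1,a_2,a_3 \mid [a_3,b_1]=a_1,\ [a_3,b_2]=a_2\,\rangle$ (all other commutators trivial), whose key feature is that the two independent central generators $a_1,a_2$ both perturb the \emph{same} coordinate $a_3$ under conjugation. Ordering $\langle a_1,a_2,a_3\rangle\cong\mathbb{Z}^3$ by the vector $(\sqrt{2},\sqrt{3},1)$ and extending lexicographically, conjugation by $b_1^{k}b_2^{t}$ translates the $a_3$-coefficient by integer combinations of $\sqrt{2}$ and $\sqrt{3}$ (Lemma \ref{lemavector}); density of $\sqrt{2}\,\mathbb{Z}+\sqrt{3}\,\mathbb{Z}$ in $\mathbb{R}$ then produces conjugates converging to, but distinct from, the original order, and Lemma \ref{dico} lifts this to a condensed point of $\mathcal{LO}(\mathcal{N})$. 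That density argument is the precise, short replacement for the ``compactness and recurrence argument'' you hoped would exist, and it is needed only in the harmless direction. Note also that your implicit assumption that violating the hypotheses of Theorem \ref{thmsmooth} forces non-smoothness is unjustified; for any candidate group one must actually locate an order in which two rationally independent central values feed into a common coordinate, which is exactly what the choice of $\mathcal{N}$ and of the vector $(\sqrt{2},\sqrt{3},1)$ is engineered to achieve.
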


We prove Theorem \ref{nosmooth} in Section 3.2 and we verify that the nilpotent group of $k$-by-$k$ lower triangular matrices with integer entries
and 1 in the diagonal satisfy Theorem \ref{nosmooth} for $k\geq 4$ (see Section 3.3). In this way, we present a countable family of nilpotent groups whose conjugacy orbit relation on its space of orders is not smooth.\\  

\noindent\textbf{Acknowledgment:} We are grateful to Martín Gilabert for useful discussions on the subject and to Andrés Navas, Adam Clay and Filippo Calderoni for valuable comments. The author extends special thanks to Cristóbal Rivas for helping to develop this work.
The author was partially funded by the Fondecyt Project 1241135 and the ECOS 23003 project:
``Small spaces under action''.
\section{Preliminaries}

\subsection{Smoothness in $G$-Polish spaces}

As we have mentioned in the Introduction, we are interested in verifying if $E_{lo}(G)$ is or is not a smooth equivalence relation when $G$ is a nilpotent group. Recall this concept. 

A Borel equivalence relation $E$ on a standard Borel space $X$ is called {\it smooth} if the quotient space $X/E$ is also a standard Borel space. Equivalently, {\it $E$ is a smooth equivalence relation if and only if there exists a standard Borel space $Y$ equipped with the identity relation $\Delta_Y$, such that $E\leq_B\Delta_Y$} \cite{ClayCalderoni1}.

At this point, we do not have any useful tool to verify whether a Borel equivalence relation is smooth or not. Fortunately, Osin shows a dynamic equivalence for smoothness when we consider a group action by homeomorphisms on a Polish space.

\begin{prop}\cite[Proposition 2.7]{Osin}
\label{condpoint}
    {\it Suppose that $G$ is a countable group and $X$ is a Polish $G$-space. Then the following are equivalent:
\begin{itemize}
    \item[(1)] $E_{G}^X$ is smooth.
    \item[(2)] There are no condensed points in $X$.
\end{itemize}}
\end{prop}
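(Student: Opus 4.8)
The plan is to translate both conditions into a single topological property of the $G$-orbits, namely \emph{local closedness}, and then invoke the standard descriptive--set--theoretic machinery on each side. Throughout I take a point $x\in X$ to be \emph{condensed} when $x\in\overline{Gx\setminus\{x\}}$, i.e. when $x$ is an accumulation point of its own orbit. Since $G$ acts transitively on each orbit, an orbit $O=Gx$ is homogeneous, so either every point of $O$ is isolated in $O$ ($O$ is relatively discrete) or no point is ($O$ is dense-in-itself). Thus ``$X$ has no condensed points'' means exactly ``every orbit is relatively discrete''. The first step is the elementary lemma that, for a countable orbit in a Hausdorff space, being relatively discrete is equivalent to being locally closed: one direction is a short separation argument (if each point of $O$ is isolated in $O$, then each point is in fact isolated in $\overline{O}$, so $\overline{O}\setminus O$ is closed), and the converse is Baire category (a locally closed orbit is Polish, and a nonempty dense-in-itself Polish space is uncountable, contradicting countability of $O$). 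Hence ``no condensed points'' $\Longleftrightarrow$ ``every orbit is locally closed''.

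For $(2)\Rightarrow(1)$ I would show that, when all orbits are locally closed, the Borel map $x\mapsto\overline{Gx}$ into the Effros Borel space $F(X)$ of closed subsets of $X$ is a reduction of $E_G^X$ to equality, which gives smoothness. Borelness is immediate, since $\{x:\overline{Gx}\cap U\neq\emptyset\}=\bigcup_{g\in G}g^{-1}U$ is open for each basic open $U$. The content is injectivity on orbits: if $\overline{Gx}=\overline{Gy}=:C$ with $Gx\neq Gy$, then $Gx$ and $Gy$ are disjoint, each open in $C$ by local closedness, and each dense in $C$; but the intersection of two dense open subsets of any space is dense, hence nonempty, a contradiction. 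Therefore distinct orbits have distinct closures and $E_G^X\leq_B\Delta_{F(X)}$.

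For $(1)\Rightarrow(2)$ I would argue contrapositively: given a condensed point $x$, its orbit is dense-in-itself and countable, hence meager in its closure $C=\overline{Gx}$ by the Baire category theorem. A dense, meager orbit is precisely the situation of the Glimm--Effros theorem: using the group elements $g_0,g_1,\dots$ that witness the accumulation of $Gx$ at its points, I would run a Cantor-scheme construction inside $C$, choosing a tree of shrinking basic open sets so as to produce a continuous injection $f:2^{\mathbb{N}}\to C$ with $f(\alpha)\,E_G^X\,f(\beta)\Longleftrightarrow\alpha\,E_0\,\beta$. This exhibits $E_0\leq_B E_G^X$, and the Harrington--Kechris--Louveau dichotomy then forces $E_G^X$ to be non-smooth.

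The main obstacle is exactly the reduction property of $f$ in $(1)\Rightarrow(2)$: it is easy to arrange by the Cantor scheme that $E_0$-equivalent sequences land in a common orbit, but ensuring the \emph{converse}---that $E_0$-inequivalent sequences land in distinct orbits---requires careful bookkeeping so that no unintended group element identifies two branches. This is where the meagerness of $Gx$ in $C$ must be used quantitatively, choosing the $g_n$ and the open sets to defeat every $g\in G$ along a cofinal set of levels. The remaining ingredients (the separation and Baire arguments of the first paragraph, and the Borel reduction of the second) are routine.
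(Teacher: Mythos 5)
A preliminary remark: the paper does not prove this proposition at all --- it is imported verbatim from Osin's article --- so there is no in-paper argument to compare yours against; what follows judges your proposal on its own terms.

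Your first two paragraphs are correct and essentially complete. The homogeneity of orbits under an action by homeomorphisms, the separation argument showing that a relatively discrete orbit is open in its closure (hence locally closed), the perfect-set/Baire argument for the converse, and the reduction $x\mapsto\overline{Gx}$ into the Effros Borel space $F(X)$ --- with injectivity on classes obtained from the fact that two disjoint dense open subsets of $\overline{Gx}$ cannot coexist --- are all sound; together with the standardness of $F(X)$ this gives $(2)\Rightarrow(1)$.

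The genuine gap is the direction $(1)\Rightarrow(2)$. What you offer there is a plan, not a proof: the Cantor-scheme construction of a continuous reduction of $E_0$ into $E_G^X$ is exactly the technical heart of the Glimm--Effros-type dichotomy, and you explicitly leave its crux --- arranging that $E_0$-inequivalent branches are never identified by \emph{any} group element --- as ``careful bookkeeping'' that you do not carry out. Until that bookkeeping is exhibited (one must enumerate $G$ and, at stage $n$, separate the relevant pieces of the scheme into open sets that $g_n$ cannot match up, using density of the orbit in its closure), the hard direction is unproven. A smaller logical slip: once $E_0\leq_B E_G^X$ is in hand, non-smoothness follows from the classical fact that $E_0$ itself is not smooth; the Harrington--Kechris--Louveau dichotomy is the converse implication and does no work here.

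Note also that you can close this gap with elementary Baire category instead of the $E_0$-embedding, in a way that is closer in spirit to Osin's paper (whose subject is a topological zero-one law). If $x$ is condensed, set $C:=\overline{Gx}$; this is a closed $G$-invariant perfect Polish subspace in which the countable orbit $Gx$ is dense, so every orbit in $C$ is countable in a perfect space and hence meager. Topological transitivity gives generic ergodicity: every invariant subset of $C$ with the Baire property is meager or comeager. If $f:C\to\{0,1\}^{\mathbb{N}}$ were a Borel reduction of $E_G^C$ to equality, then for each $n$ the invariant Borel sets $f^{-1}([s])$, as $s$ ranges over binary strings of length $n$, partition $C$, so exactly one is comeager; these choices cohere along $n$ and determine a point $y$ with $f^{-1}(\{y\})$ comeager. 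But $f^{-1}(\{y\})$ is a single orbit, hence meager --- a contradiction. Since $C$ is an invariant Borel subset of $X$, non-smoothness of $E_G^C$ implies non-smoothness of $E_G^X$.
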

Here, Osin calls {\it condensed point} to an element that it is a limit point of its own orbit. Therefore, the previous proposition lets us conclude that smoothness is equivalent to the fact that every orbit in a $G$-space $X$ is discrete as a subspace.\\

\noindent\textit{Example.} Consider a rational rotation action of $\mathbb{Z}$ on the circle $S^1$. Given that every orbit in $S^1$ is finite, there are no condensed points in $S^1$. Then the relation $E_{\mathbb{Z}}^{S^1}$ is smooth.
On the other hand, if we consider a irrational rotation action, then every point in $S^1$ is a condensed point. Then, in this case, $E_{\mathbb{Z}}^{S^1}$ is not a smooth equivalence relation.

\subsection{On orders on nilpotent groups}

In this section, 
we give background on orders on finitely generated nilpotent groups.
It is known that \textit{every left-invariant order on a nilpotent group is Conradian} thanks to a result proved by J.C. Ault \cite{Ault}. See also a dynamical approach from \cite{Navas2} and \cite{NaRivas}.
By definition, a Conradian order on a group $G$ is an left-invariant order $\prec$ that satisfies that for all $id\prec f$ and $1\prec g$ we have $g\prec fg^n$ for some $n\in \mathbb{N}$. Note that every bi-order is a Conradian order, although the converse is not true. However, the Conradian orders, as the bi-orders, have important structural consequences on the group, that we now present.

Consider an order $\prec$ in $G$. We say that a subset $C\subseteq G$ is {\it convex} relative to $\prec$ if for all $g,h\in C$ and $f\in G$ the implication $g\prec f\prec h\implies f\in C$ holds. We define the notion of \textit{convex subgroup} analogously. Moreover, we say that a pair $(C,D)$ is a {\it convex jump} (with respect to $\prec$) if $C\subset D$, both $C$ and $D$ are convex subgroups relative to $\prec$ and there is no strict convex subgroup between them. The next result is known as Conrad's Theorem \cite{Conrad} (see also \cite{Botto}):

\begin{thm}\textbf{(Conrad)} {\it A left-order $\preceq$ on a group $G$ is said to be Conradian if the following four equivalent properties hold:
\begin{itemize}
    \item[(1)] For all $f\succ id $ and $g\succ id$ (for all positive $f,g$, for short), we have $fg^n\succ g$ for some $n\in \mathbb{N}$.
    \item[(2)] If $id\prec g\prec f$, then $g^{-1}f^ng\succ f$ for some $n\in \mathbb{N}$.
    \item[(3)] For all positive $g\in G$, the set $S_g=\{f\in G\ |\ f^n\prec g,\ \textit{for all } n\in\mathbb{Z}\}$ is a convex subgroup.
    \item[(4)] For every $g$, we have that $G_g$ is normal $G^g$, and there exists a nondecreasing group homomorphism (to be referred to as the Conrad homomorphism) $\tau_{\preceq}^g:G^g\rightarrow \mathbb{R}$ whose kernel coincides with $G_g$. Moreover, this homomorphism is unique up to multiplication by a positive real number.
\end{itemize}
}
\end{thm}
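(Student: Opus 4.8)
The plan is to prove Conrad's Theorem by establishing the cycle of implications $(1)\Rightarrow(2)\Rightarrow(3)\Rightarrow(4)\Rightarrow(1)$, so that the four conditions become mutually equivalent once the chain is closed. Throughout I would argue only with left-invariance, keeping in mind that right multiplication need not respect $\preceq$. For $(1)\Rightarrow(2)$, given $id\prec g\prec f$, left-multiplying the target $g^{-1}f^ng\succ f$ by $g$ reduces it to $f^ng\succ gf$; the point is that condition (1), once isolated, is an Archimedean-type statement asserting that positive powers of a positive element eventually dominate a fixed element, and this lets large powers of $f$ absorb the fixed factor $gf$. This step is essentially order-algebraic and I expect it to be routine.

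For $(2)\Rightarrow(3)$, convexity of $S_g$ follows directly from its definition, and closure under inverses is immediate because the defining condition $f^n\prec g$ quantifies over all $n\in\mathbb{Z}$. The substantive point is closure under products: given $f,h\in S_g$ one must show $(fh)^n\prec g$ for every $n$, and this is exactly where the Conradian property (2) is invoked to prevent the product of two elements that are infinitesimal relative to $g$ from overtaking $g$.

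The core of the theorem, and the step I expect to be the main obstacle, is $(3)\Rightarrow(4)$. Fixing a positive $g$, I would take the convex jump $(G_g,G^g)$ it determines, where $G_g$ is the convex subgroup $S_g$ supplied by (3) and $G^g$ is the smallest convex subgroup containing $g$. First I would show that $G_g$ is normal in $G^g$ from the convex structure together with the Conradian hypothesis. The crucial claim is then that the order induced on the quotient $G^g/G_g$ is Archimedean; granting this, Hölder's theorem produces an order-preserving embedding of $G^g/G_g$ into $(\mathbb{R},+)$, and precomposing with the quotient map yields the nondecreasing homomorphism $\tau^g_{\preceq}: G^g\to\mathbb{R}$ with kernel $G_g$, unique up to a positive scalar by the uniqueness part of Hölder's theorem. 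Deriving the Archimedean property of the jump quotient from (3) is the delicate part, since this is precisely where the abstract convex-subgroup condition must be converted into genuine real-valued data.

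Finally, for $(4)\Rightarrow(1)$ I would transport the multiplicative inequalities through $\tau^g_{\preceq}$: for positive $f,g$ in the relevant convex subgroup one has $\tau(fg^n)=\tau(f)+n\,\tau(g)$, and the Archimedean property of $(\mathbb{R},+)$ forces this quantity to exceed $\tau(g)$ for large $n$, which pulls back to $fg^n\succ g$; the case where $f$ and $g$ lie in different jumps is settled immediately by convexity. The only genuine difficulty in the whole scheme is the Hölder embedding in $(3)\Rightarrow(4)$, with every other implication reducing to manipulation of inequalities under left multiplication.
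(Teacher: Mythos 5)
A point of context first: the paper does not prove this statement at all. Conrad's Theorem is quoted as a classical result, with citations to Conrad's original article and to the book of Botto-Mura and Rhemtulla, so your attempt can only be measured against the standard proofs in the literature. Your overall architecture --- a cycle of implications, with H\"older's theorem powering $(3)\Rightarrow(4)$ and transport through $\tau_{\preceq}^g$ for $(4)\Rightarrow(1)$ --- is indeed the standard one, and your sketch of $(4)\Rightarrow(1)$ (splitting into $f\in G^g$, handled by the homomorphism, and $f\notin G^g$, handled by convexity) is essentially correct as written.

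However, there are genuine gaps. The most serious is your justification of $(1)\Rightarrow(2)$: you read condition (1) as ``an Archimedean-type statement asserting that positive powers of a positive element eventually dominate a fixed element.'' That is a misreading. Condition (1) compares $fg^n$ with $g$, not $g^n$ with an arbitrary fixed element; it asserts nothing about powers dominating, and Conradian orders are typically very far from Archimedean --- the lexicographic orders on $\mathbb{Z}^2$, and every order on the nonabelian nilpotent groups studied in this paper, are Conradian. Indeed, if (1) implied Archimedean-ness, then H\"older's theorem would force every Conradian-ordered group to embed in $(\mathbb{R},+)$ and hence be abelian, contradicting the existence of the Heisenberg examples that motivate the paper. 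So the proposed ``absorption'' argument for $f^ng\succ gf$ has no support, and this implication needs a genuine proof. Two further gaps: in $(2)\Rightarrow(3)$, convexity of $S_g$ is \emph{not} immediate from the definition. In a general left-order one can have $id\prec c\prec b$ with every power of $b$ below $g$ yet $c^2\succ g$ (a ``crossing''), because $c\prec b$ gives no control of $c^n$ against $b^n$ under left-invariance alone; convexity, exactly like closure under products, requires the Conradian hypothesis. Finally, the step you yourself identify as the heart of the theorem, $(3)\Rightarrow(4)$ --- normality of $G_g$ in $G^g$ and the Archimedean property of the induced order on $G^g/G_g$ --- is deferred rather than proved, so the mathematical core of the statement is absent from the proposal.
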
 
In this statement, $G_g$ (resp. $G^g$) denotes the largest (resp. smallest) convex subgroup that does not contain $g$ (res. contains $g$).
The reader can note that if $(C,D)$ is a convex jump with respect to a Conradian order, then $C=G_g$ and $D=G^g$ for some $g\in G$.
Finally, we conclude from this Theorem that a left-invariant order $\prec$ in $G$ is Conradian if every convex jump $(C,D)$ satisfies $C\vartriangleleft D$ and there exists a nondecreasing morfisphism $\tau:D\rightarrow \mathbb{R}$ whose kernel is $C$. 

Now, let $G$ be a finitely generated left-orderable group. Given an order $\prec$ in $G$, there exists a unique maximal proper convex subgroup relative to $\prec$. If $G$ is also a nilpotent group, then every subgroup of $G$ is also finitely generated \cite[Chapter 5]{Robinson}. Hence, from an inductive argument, there exists a unique finite subgroup series
$$\{1\}=C_{n+1}\leq C_{n} \leq \dots\leq  C_0=G$$
such that $C_{i+1}$ is the maximal proper convex subgroup relative to order $\prec$ restricted to $C_i$. 
We conclude the following:

\begin{prop}
\label{series}
Let $G$ be a finitely generated left-orderable group. If $G$ is nilpotent, then every order in $G$ is Conradian. Moreover, if $\prec$ is an order on $G$, there exists a unique subnormal series
$$\{1\}=C_{n+1}\vartriangleleft C_{n} \vartriangleleft \dots \vartriangleleft  C_0=G$$
and nondecreasing morphisms $f_i:C_i\rightarrow \mathbb{R}$ which Ker$f_i=C_{i+1}$.
\end{prop}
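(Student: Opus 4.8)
The first assertion requires no work: by Ault's theorem \cite{Ault} every left-invariant order on a nilpotent group is Conradian, so $\prec$ is Conradian and all four equivalent characterizations of Conrad's Theorem become available. The remaining content is the construction of the series together with its uniqueness, and I would obtain it by a descending induction through convex subgroups, feeding each stage into Conrad's Theorem.

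The engine of the induction is a finite-generation lemma that I would isolate first: every nontrivial finitely generated left-ordered group $H$ has a unique maximal proper convex subgroup. Uniqueness is formal, since the convex subgroups of a left-ordered group form a chain under inclusion, so two maximal proper ones would be comparable. For existence I would argue by contradiction. The proper convex subgroups form a chain, hence their union $U$ is again a convex subgroup; if no maximal proper convex subgroup existed, then $U$ could not itself be proper (otherwise it would be the maximum), so $U=H$. But a finite generating set of $H$ would then be contained in the largest of the finitely many chain members containing the generators, forcing that proper convex subgroup to equal $H$, a contradiction.

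With the lemma available I would set $C_0=G$ and, given a nontrivial $C_i$, let $C_{i+1}$ be its unique maximal proper convex subgroup relative to the restricted order; this is legitimate because every subgroup of a finitely generated nilpotent group is again finitely generated \cite{Robinson} and nilpotent, so the lemma applies at each stage. Transitivity of convexity shows that each $C_{i+1}$ is convex not only in $C_i$ but in $G$, and that no convex subgroup of $G$ lies strictly between $C_{i+1}$ and $C_i$; thus $(C_{i+1},C_i)$ is a convex jump of $(G,\prec)$. Conrad's Theorem, in characterization (4), then yields at once $C_{i+1}\vartriangleleft C_i$ together with a nondecreasing homomorphism $f_i:C_i\rightarrow \mathbb{R}$ with $\ker f_i=C_{i+1}$, namely the Conrad homomorphism of that jump.

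It remains to check that the chain reaches $\{1\}$ in finitely many steps and that the data are unique. For termination I would use Hirsch length: each quotient $C_i/C_{i+1}$ embeds into $\mathbb{R}$ via $f_i$ and is therefore a nontrivial torsion-free abelian group, so its Hirsch length is at least $1$; since a finitely generated nilpotent group is polycyclic of finite Hirsch length and this length is additive along the jumps, the chain has length at most $h(G)$ and must terminate at $\{1\}$. Uniqueness of the series follows from the uniqueness clause of the lemma applied inductively, and the $f_i$ are unique up to multiplication by a positive real by the final part of Conrad's Theorem. The main obstacle is precisely the existence half of the maximal-convex-subgroup lemma together with the finiteness of the series: these are the two points where finite generation, through polycyclicity and finiteness of the Hirsch length, is genuinely indispensable, since for a general left-ordered group neither a maximal proper convex subgroup nor a finite Conradian series need exist.
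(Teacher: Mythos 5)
Your proof is correct and follows essentially the same route as the paper, which derives the proposition from Ault's theorem, Conrad's Theorem, the existence of a unique maximal proper convex subgroup, and the fact that subgroups of finitely generated nilpotent groups are finitely generated \cite{Robinson}. The only difference is that you make explicit two points the paper leaves implicit (the chain-union argument for the maximal convex subgroup and the Hirsch-length bound guaranteeing the series is finite), and both of these details are correct.
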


In contrast, if we consider an short exact sequence
$$1\longrightarrow K\longhookrightarrow  G\overset{p}\longrightarrow H\longrightarrow 1$$
where $(K,\prec_K)$ and $(H,\prec_{H})$ are left-ordered groups, then $G$ admits an order in a lexicographic way; this means, $g\prec g_0$ if and only if $p(g)\prec_Hp(g_0)$ or $p(g)=p(g_0),$ but $1_G\prec_Kg^{-1}g_0$ \cite[Problem 1.8]{ClayRolfsen}. From this fact it follows that every subnormal series, whose factors are left-orderable groups, induces a lexicographic order on $G$. 
In particular, for every (finitely generated) nilpotent group $G$ and every positive cone $P$ of it, we know that $C_{i}/C_{i+1}$ corresponds to a finitely generated abelian group for all $i=0,1,\dots, n-1$. Therefore, $P$ is defined lexicographically by orders in $\mathbb{Z}^d$'s.

Let $\prec$ be an order on a finitely generated nilpotent group $G$ and $P$ and $\{1\}=C_{n+1}\vartriangleleft C_n\vartriangleleft \dots \vartriangleleft C_0=G$ its associated positive cone and subnormal series, respectively. Since $C_1$ is a normal subgroup of $G$, we know that $g^{-1}(P\cap C_1)g\in \mathcal{LO}(C_1)$ for all $g\in G$. In fact, the previous property holds for any $C_i$ that is normal in $G$.
This motivates the following definition.

\begin{defi}
Let $P$ be a positive cone of a finitely generated nilpotent group $G$ and $\{1\}=C_{n+1}\vartriangleleft C_n\vartriangleleft \dots \vartriangleleft C_0=G$ its associated (sub)normal series of convex subgroups. For all $0\leq i\leq n$, we call to $P_i:=P\cap (C_i\setminus C_{i+1})$ the component of $P$ associated with subgroup $C_i$.
\end{defi}

Let us suppose that $C_i$ and $C_{i+1}$ are both normal in $G$ for some index $i$. It is obvious that $g^{-1}(C_i\setminus C_{i+1})g\subseteq C_i\setminus C_{i+1}$ for all $g\in G$. 
In this case, we say then conjugation action of $G$ preserves the component $P_i$, since $g^{-1}P_ig=g^{-1}(P\cap C_i\setminus C_{i+1})g\subset C_i\setminus C_{i+1}.$
Therefore, if the conjugation action preserves the first $m$ components of $P$, we have that
\begin{equation}
\label{equation1}
g^{-1}Pg=g^{-1}P_0g\ \amalg\ \dots\ \amalg g^{-1}P_{m-1}g\ \amalg\ g^{-1}(P\cap C_m)g  
\end{equation}
for all $g\in G$.

Let us suppose now that $G$ not only preserves some component $P_i$ of $P$, but $G$ acts trivially on $C_i/C_{i+1}$. This implies that $g^{-1}P_ig=P_i$ for all $g\in G$. In this case, we say that $G$ fixes the component $P_i$.
In particular, if $G$ preserves and fixes the first $m$ components of $P$, we conclude from equation (\ref{equation1}) that
\begin{equation}
\label{equation2}
g^{-1}Pg=P\cap (G\setminus C_m)\ \amalg\ g^{-1}(P\cap C_m)g  
\end{equation}
for all $g\in G$. 

Therefore, when equation (\ref{equation2}) holds, we only need to study the conjugacy orbit of $P\cap C_m$  to study the conjugacy orbit of $P$. For it, we can use the fact that the automorphism group Aut$(G)$ acts by homeomorphisms on $\mathcal{LO}(G)$ for every left-orderable group $G$ in the natural way; this means $\mathcal{LO}(G)\times \text{Aut}(G)\rightarrow \mathcal{LO}(G)$, where $(P,\varphi)\rightarrow \varphi^{-1}(P)$ \cite[Chapter 10]{ClayRolfsen}. 
We develop this idea in the following:

\begin{lem}
\label{dico}
Let $G$ be a finitely generated nilpotent left-orderable group that preserves and fixes the first $m$ components of some positive cone $P$ and denote $P^*:=P\cap C_m$. Then:
\begin{itemize}
    \item[i)] If Orb$_G(P^*)$ is a discrete subspace of $\mathcal{LO}(C_m)$, then $P$ is not condensed in $\mathcal{LO}(G)$. 
    \item[ii)] If $P^*$ is a limit point of Orb$_G(P^*)$, then $P$ is condensed in $\mathcal{LO}(G)$.
\end{itemize}
\end{lem}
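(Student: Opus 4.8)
The plan is to transfer the whole question from $\mathcal{LO}(G)$ down to $\mathcal{LO}(C_m)$ by exploiting the decoupling recorded in equation (\ref{equation2}). Since preserving the first $m$ components forces $C_m\vartriangleleft G$, conjugation by any $g$ restricts to an automorphism of $C_m$, so the assignment
\[
\Phi:\mathrm{Orb}_G(P)\longrightarrow \mathcal{LO}(C_m),\qquad g^{-1}Pg\longmapsto (g^{-1}Pg)\cap C_m=g^{-1}P^*g,
\]
is well defined, has image exactly $\mathrm{Orb}_G(P^*)$, and satisfies $\Phi(P)=P^*$. Equation (\ref{equation2}) shows that each orbit point splits as $g^{-1}Pg=\bigl(P\cap(G\setminus C_m)\bigr)\amalg g^{-1}P^*g$, where the first summand is the \emph{same} for every $g$; hence $g^{-1}Pg$ is completely recovered from its $C_m$-component, and $\Phi$ is a bijection onto $\mathrm{Orb}_G(P^*)$.

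The key step is to check that $\Phi$ is a homeomorphism for the subspace topologies. Continuity is easy: a basic neighborhood of $g^{-1}P^*g$ in $\mathcal{LO}(C_m)$ is cut out by finitely many elements $s_1,\dots,s_q\in C_m$, and since the membership of an element of $C_m$ in $h^{-1}Ph$ agrees with its membership in $h^{-1}P^*h$, the $\Phi$-preimage of this neighborhood is precisely the trace on $\mathrm{Orb}_G(P)$ of the basic neighborhood in $\mathcal{LO}(G)$ determined by the same $s_1,\dots,s_q$. For the inverse, take a basic neighborhood $U$ of $g^{-1}Pg$ in $\mathcal{LO}(G)$ given by $t_1,\dots,t_p$ and split these into those lying in $C_m$ and those lying in $G\setminus C_m$. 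The constraints coming from the elements of $C_m$ translate, as above, into a basic neighborhood in $\mathcal{LO}(C_m)$; the constraints coming from the elements of $G\setminus C_m$ are satisfied by \emph{every} point of $\mathrm{Orb}_G(P)$, because by (\ref{equation2}) the $(G\setminus C_m)$-part of each orbit point is the fixed set $P\cap(G\setminus C_m)$. This last observation --- that conditions outside $C_m$ impose nothing within the orbit --- is the heart of the argument, and it is exactly where the hypothesis that $G$ preserves and fixes the first $m$ components is used; I expect it to be the only genuinely delicate point.

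With $\Phi$ a homeomorphism sending $P$ to $P^*$, both conclusions follow at once. For part ii), if $P^*$ is a limit point of $\mathrm{Orb}_G(P^*)$ then, transporting along $\Phi^{-1}$, $P$ is a limit point of $\mathrm{Orb}_G(P)$, which by Osin's criterion (Proposition \ref{condpoint}) says precisely that $P$ is condensed. For part i), if $\mathrm{Orb}_G(P^*)$ is discrete then every point of $\mathrm{Orb}_G(P)$ is isolated as well; in particular $P$ is isolated in its own orbit, so it is not a limit point of $\mathrm{Orb}_G(P)$ and hence not condensed. If one prefers to avoid invoking the full homeomorphism, each implication admits a direct proof: for ii) one takes an arbitrary basic neighborhood of $P$, keeps only its $C_m$-constraints, finds by hypothesis a distinct nearby conjugate $g^{-1}P^*g$ meeting them, and checks via (\ref{equation2}) that the resulting $g^{-1}Pg$ lies in the original neighborhood while differing from $P$; for i) one pushes an isolating neighborhood of $P^*$ back up to $G$ using the same elements. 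In either route the single point demanding care is the verification that the coordinates outside $C_m$ never separate points of the orbit.
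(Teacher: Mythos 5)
Your proposal is correct and rests on exactly the same mechanism as the paper's proof: equation (\ref{equation2}) makes the part of each orbit point outside $C_m$ constant, so membership conditions from $G\setminus C_m$ are vacuous on the orbit while conditions from $C_m$ transfer verbatim between $g^{-1}Pg$ and $g^{-1}P^*g$. The paper simply runs the two neighborhood computations directly (which is your stated ``direct alternative''), whereas you package them once as the statement that $Q\mapsto Q\cap C_m$ is a homeomorphism of orbit subspaces and then transport isolation and limit points along it; both are valid, and your verification of the key point is sound.
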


    \indent\textit{Proof of i)} If Orb$_G(P^*)$ is discrete in $\mathcal{LO}(C_m)$, then there exist elements $b_1,\dots, b_s\in C_m$ and a neighborhood $U^*=\{\ Q^*\in \mathcal{LO}(C_m)\ |\ b_1,\dots, b_s\in Q^*\ \}$ of $P^*$ such that Orb$_G(P^*)\cap U^*=\{P^*\}$.
    Thanks to equation (\ref{equation2}), the reader can conclude that Orb$_G(P)\cap U=\{P\}$
    where $U=\{\ Q\in \mathcal{LO}(G)\ |\ b_1,\dots, b_s\in Q\ \}$. Therefore, $P$ is not condensed in $\mathcal{LO}(G).$
    \\

    \textit{Proof of ii)} Let $U$ be an arbitrary neighborhood of $P$ in $\mathcal{LO}(G)$. By definition, there exist elements $a_1,\dots, a_t\in G\setminus C_m$ and $b_1,\dots, b_s\in C_m$ such that
    $$U=\{\ Q\in \mathcal{LO}(G)\ |\ a_1,\dots, a_t,b_1,\dots, b_s\in Q\ \}.$$

    Consider then the neighborhood $U^*=\{\ Q^*\in\mathcal{LO}(C_m)\ |\ b_1,\dots, b_s\in C_m\ \}$ of $P_m$ in $\mathcal{LO}(C_m)$. 
    By hypothesis, there exists an element $g\in G$ such that $g^{-1}P^*g\in U^*\setminus\{P^*\}$. In particular, $b_1,\dots, b_s\in g^{-1}P^*g$.     
    Moreover, from equation (\ref{equation2}), it follows that $a_1,\dots, a_t\in P\cap (G\setminus C_m)=g^{-1}[P\cap(G\setminus C_m) ]g$. Then $g^{-1}Pg\in U\setminus\{P\}$. Finally, $P$ is a condensed point in $\mathcal{LO}(G)$.$\hfill\square$

\subsection{On orders on abelian groups}

\indent From here until the end of this article, we denote by $B_0=\{\vec{e}_1,\dots, \vec{e}_n\}$ the canonical basis of $\mathbb{Z}^n\subseteq \mathbb{R}^n$ and $\langle\ ,\ \rangle$ the usual inner product of $\mathbb{R}^n$.

Consider a free abelian group $\mathbb{Z}^n$, for $n\geq1$. 
Recall that every order on a nilpotent group must be Conradian. In particular, by Proposition \ref{series}, for every order $\prec$ on $\mathbb{Z}^n$ there exists a finite (sub)normal series $$\{1\}=C_{m+1}\vartriangleleft C_m\vartriangleleft\dots \vartriangleleft  C_0=\mathbb{Z}^n$$
and nondecreasing homomorphisms $f_i:C_i\subseteq \mathbb{Z}^n\rightarrow \mathbb{R}$ such that Ker$f_i=C_{i+1}$.

Let $P$ the associated positive cone and consider $f_0$ the first homomorphism from above. If we let the vector $\vec{v}_0=(f_0(\vec{e_1}),\dots, f_0(\vec{e}_n))$, then for all $\vec{w}=b_1\cdot\vec{e}_1+\dots+ b_n\cdot \vec{e}_n\in \mathbb{Z}^n$ we have that
$$f_0(\vec{w})=b_1f_0(\vec{e}_1)+\dots + b_nf_0(\vec{e}_n)=\langle\vec{v_0},\vec{w}\rangle.$$

It follows from this expression that $\vec{w}\in P\cap (\mathbb{Z}^n\setminus C_1)$ if and only if $\langle\vec{v}_0,\vec{w}\rangle>0$.
More generally, for all $i=0,\dots, m$ there exists a vector $\vec{v}_i\in \mathbb{R}^n$ such that $f_i=\langle\vec{v}_i, \cdot\rangle$ and, therefore, $\vec{w}\in P_i=P\cap (C_i\setminus C_{i+1})$ if and only if $\vec{w}\in C_i$ and $f_i(\vec{w})=\langle \vec{v}_i,\vec{w}\rangle>0.$
We conclude that a vector $\vec{w}\in \mathbb{Z}^n$ is in $P$ if and only if there exists $i_0\leq m$ such that $\langle \vec{v}_{i_0},\vec{w}\rangle>0$ and $\langle\vec{v}_i,\vec{w}\rangle=0$ for all $i<i_0$. 
In this case, we say that $P$ is defined by the vectors $\vec{v}_0,\vec{v_1},\dots, \vec{v}_m$.

The following correspond to a simple but useful result:

\begin{prop}
\label{vectors}
Let $P$ be a positive cone of $\mathbb{Z}^n$ defined by $\vec{v}_0,\dots,\vec{v}_t\in \mathbb{R}^n$, and $A$ a matrix in $GL_{n}(\mathbb{Z})$. Then the order $A(P)$ is defined by $\vec{u}_0,\dots,\vec{u}_t\in\mathbb{R}^n$, where $\vec{u_i}:=(A^{-1})^t(\vec{v_i})$ for all indices $i$.
\end{prop}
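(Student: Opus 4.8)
The plan is to prove this directly from the definition of ``being defined by vectors'' together with the standard adjoint identity for the inner product. First I would unwind what $A(P)$ means: since $A\in GL_n(\mathbb{Z})=\mathrm{Aut}(\mathbb{Z}^n)$ acts by homeomorphisms on $\mathcal{LO}(\mathbb{Z}^n)$, the positive cone $A(P)$ is the image of $P$ under the automorphism, i.e.\ the set of vectors $A\vec{w}$ with $\vec{w}\in P$. Equivalently, a vector $\vec{z}\in\mathbb{Z}^n$ lies in $A(P)$ if and only if $A^{-1}\vec{z}\in P$. This reduces the whole statement to rewriting the membership condition for $P$ after the substitution $\vec{w}=A^{-1}\vec{z}$.

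Next I would recall, from the discussion preceding the proposition, that $A^{-1}\vec{z}\in P$ holds if and only if there is an index $i_0\le t$ with $\langle \vec{v}_{i_0}, A^{-1}\vec{z}\rangle>0$ and $\langle \vec{v}_i, A^{-1}\vec{z}\rangle=0$ for every $i<i_0$. The only computation needed is the adjoint identity $\langle \vec{v}_i, A^{-1}\vec{z}\rangle=\langle (A^{-1})^t\vec{v}_i, \vec{z}\rangle=\langle \vec{u}_i, \vec{z}\rangle$, valid for the usual inner product on $\mathbb{R}^n$. Substituting this into the membership condition turns it into the assertion that there is an index $i_0\le t$ with $\langle \vec{u}_{i_0}, \vec{z}\rangle>0$ and $\langle \vec{u}_i, \vec{z}\rangle=0$ for all $i<i_0$. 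By the very definition of ``defined by vectors'', this says precisely that $A(P)$ is the positive cone defined by $\vec{u}_0,\dots,\vec{u}_t$, which is the claim.

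The argument is essentially a one-line change of variables, so I do not expect a genuine mathematical obstacle; the points requiring care are bookkeeping. The first is fixing the conventions consistently: one must verify that the action sends $P$ to $\{A\vec{w}\}$ rather than to $\{A^{-1}\vec{w}\}$, since the opposite choice would produce $A^{t}\vec{v}_i$ in place of $(A^{-1})^t\vec{v}_i$. The second is to note that, because $A$ is an automorphism of $\mathbb{Z}^n$, the set $A(P)$ is automatically a positive cone, so nothing needs to be checked separately about the convex series associated with $A(P)$; the invertibility of $A^{-1}$ guarantees that the orthogonality and sign relations among the functionals $\langle \vec{v}_i,\,\cdot\,\rangle$ are carried over faithfully to the functionals $\langle \vec{u}_i,\,\cdot\,\rangle$, and hence the first-nonzero (lexicographic) structure that determines the order is preserved. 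That preservation is exactly what the statement asserts.
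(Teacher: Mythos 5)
Your proposal is correct and follows essentially the same route as the paper's proof: both unwind membership in $A(P)$ via the change of variables $\vec{z}\mapsto A^{-1}(\vec{z})$ and then apply the adjoint identity $\langle \vec{v}_i, A^{-1}(\vec{z})\rangle=\langle (A^{-1})^t(\vec{v}_i),\vec{z}\rangle$ to transfer the first-nonzero condition from the $\vec{v}_i$ to the $\vec{u}_i$. Your extra remarks on fixing the convention for the action and on $A(P)$ automatically being a positive cone are sound bookkeeping but do not change the argument.
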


\begin{proof}
Thanks to $\langle\vec{w}, A(\vec{u})\rangle=\langle A^{t}(\vec{w}), \vec{u}\rangle$ for all $\vec{w},\vec{u}\in \mathbb{R}^n$, we have that
\begin{align*}
    \vec{m}\in A(P)\ &\Longleftrightarrow\ A^{-1}(\vec{m})\in P\\
    &\Longleftrightarrow\ \langle\vec{v_{i_{0}}}, A^{-1}(\vec{m})\rangle>0 \text{ for some } i_0, \text{ and }\ \langle\vec{v_i}, A^{-1}(\vec{m})\rangle=0,\ \text{for all } i<i_0\\
    &\Longleftrightarrow\ \langle(A^{-1})^t(\vec{v_{i_{0}}}),\vec{m}\rangle>0 \text{ for some } i_0, \text{ and }\ \langle(A^{-1})^t(\vec{v_i}), \vec{m}\rangle=0,\ \text{for all } i<i_0. 
\end{align*}
This shows what we want.   
\end{proof}

Of course, the conjugacy action of a group $G$ restricted to a normal subgroup $H$ admits a matrix representation when $H$ is isomorphic to a finitely generated free abelian group. For it, Proposition \ref{vectors} let us determine the conjugacy orbit of every order on $H$ in that case. 
For our purposes, we are interested in the action of a particular subgroup of $SL_n(\mathbb{Z})$ which we will denote by $L_n$ and define as
$$L_n:=\left \{\ A=\left (
\begin{array}{cc}
    I_{n-1} & \vec{0}^t\\
    \vec{a} & 1
\end{array}
\right )\  :\ \vec{a}=(a_1,\dots,a_{n-1})\in \mathbb{Z}^{n-1} 
\right \}.$$

\begin{rem}
Let $n$ be a positive integer and $\mathcal{H}_{2n+1}$ be the $(2n+1)-$dimensional discrete Heisenberg group defined as in the Introduction. Let us consider the elements
$$x_i:=\left ( 
\begin{array}{ccc}
1 & \vec{e_i} & 0\\
\vec{0}^t & I_n & \vec{0}^t\\
0 & \vec{0} & 1
\end{array}
\right ), \
y_i:=\left ( 
\begin{array}{ccc}
1 & \vec{0} & 0\\
\vec{0}^t & I_n & \vec{e_i}^t\\
0 & \vec{0} & 1
\end{array}
\right ) \text{ and }
z:=
\left ( 
\begin{array}{ccc}
1 & \vec{0} & 1\\
\vec{0}^t & I_n & \vec{0}^t\\
0 & \vec{0} & 1
\end{array}
\right )
$$
for all $i\in \{1,2,\dots ,n\}$. 

The reader can check that every element in $\mathcal{H}_{2n+1}$ is expressed as $y_1^{b_1}\dots y_n^{b_n}x_1^{a_1}\dots x_n^{a_n} z^c$, for some integers $a_i$'s, $b_j$'s and $c$. 
Moreover, the elements of this generating set satisfy the relations $[x_i,y_i]=z$ for all $i$, while any other commutator is trivial. This preferred normal form for $\mathcal{H}_{2n+1}$ allows us to conclude that $\mathcal{H}_{2n+1}$ is isomorphic to $\langle\ y_1,\dots y_n\ \rangle \ltimes \langle\ x_1,\dots,x_n,z\ \rangle $, where $\langle\ y_1,\dots y_n\ \rangle\simeq \mathbb{Z}^n$ and $\langle\ x_1,\dots,x_n,z\ \rangle\simeq \mathbb{Z}^{n+1}$. We then conclude that $\mathcal{H}_{2n+1}$ admits the short exact sequence 
$$1\longrightarrow \mathbb{Z}^{n+1}\longhookrightarrow  \mathcal{H}_{2n+1}\overset{p}\longrightarrow \mathbb{Z}^n\longrightarrow 1.$$
and, therefore, $\mathcal{H}_{2n+1}$ is a left-orderable group. Moreover, this group admits infinitely many orders defined lexicographically by orders on $\mathbb{Z}^{n+1}$ and $\mathbb{Z}^n$, respectively.  

Consider now a positive cone $Q$ of $\langle\ x_1,\dots,x_n,z\ \rangle\simeq \mathbb{Z}^{n+1}$. To avoid confusion, we denote by $D=\{\vec{d}_1,\dots,\vec{d}_n,\vec{d}_{n+1}\}$ the canonical basis of $\mathbb{Z}^{n+1}$ and let the assignation given by $x_i\rightarrow \vec{d}_i$ (for $1\leq i\leq n$) and $z\rightarrow \vec{d}_{n+1}$. Given the relations between the elements of our preferred generating set, the restricted conjugated action of $\mathcal{H}_{2n+1}$ on $\langle\ x_1,\dots,x_n,z\ \rangle$ is represented by the homomorphism $\varphi:\mathcal{H}_{2n+1}\rightarrow SL_{n+1}(\mathbb{Z})$, where $\varphi(y_i)=\left (
\begin{array}{cc}
    I_{n} & \vec{0}^t \\
    \vec{e}_i & 1
\end{array}
\right )\in L_{n+1}$ for all $i=1,\dots, n$. Meanwhile, every other generating element is map to the identity matrix.

It follows from Proposition \ref{vectors} that if $Q$ is defined by vectors $\vec{v}_0,\dots,\vec{v}_m\in\mathbb{R}^{n+1}$, then $\varphi(h)(Q)$ is defined by $\varphi(h^{-1})^t(\vec{v}_0),\dots, \varphi(h^{-1})^t(\vec{v}_m)$, for all $h\in \mathcal{H}_{2n+1}$. In particular, if a positive cone $Q_0$ is defined uniquely by a vector $\vec{v}_0=(\alpha_1,\dots,\alpha_{n},\alpha_{n+1})\in \mathbb{R}^{n+1}$, where $\alpha_{n+1}\neq 0$, then every element $h$ in $\langle\ y_1,\dots y_n\ \rangle$ defines a different positive cone $\varphi(h)(Q_0)$. It is sufficient to note that $\varphi(h^{-1})^t(\vec{v}_0)\neq \varphi(k^{-1})^t(\vec{v}_0)$ if $h$ and $k$ are different elements of $\langle\ y_1,\dotsm ,y_n\ \rangle$. A posteriori, orders such as $Q_0$ let us define a lexicographic order on $\mathcal{LO}(\mathcal{H}_{2n+1})$ whose conjugacy orbit is infinite.
\end{rem}

\section{Main Results}

To prove Theorem \ref{thmsmooth}, we establish the following result with respect to the action of $L_n$, the group defined in the previous section, on the space $\mathcal{LO}(\mathbb{Z}^n)$.

\begin{prop}
\label{matrixact}
    Let $n$ be a positive integer. The canonical action $L_n\curvearrowright \mathcal{LO}(\mathbb{Z}^n)$ has no condensed points. In particular, the induced equivalence relation is smooth.
\end{prop}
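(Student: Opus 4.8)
The plan is to invoke Osin's criterion (Proposition \ref{condpoint}): it suffices to show that the action $L_n\curvearrowright\mathcal{LO}(\mathbb{Z}^n)$ has no condensed points, i.e. that every positive cone $P$ is isolated in its own orbit $\mathrm{Orb}_{L_n}(P)$ (isolation of one point of an orbit transfers to all of them, as $L_n$ acts by homeomorphisms). Fix $P$, defined by vectors $\vec v_0,\dots,\vec v_m\in\mathbb R^n$. The first concrete step is to record the action on the defining data via Proposition \ref{vectors}: for $A\in L_n$ with parameter $\vec a$, a direct computation of $(A^{-1})^t$ shows that $A(P)$ is defined by the vectors obtained from the $\vec v_i$ by leaving the last coordinate $(v_i)_n$ unchanged and translating the first $n-1$ coordinates by $-(v_i)_n\,\vec a$. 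Thus the last coordinates are invariants of the action, and a defining vector is fixed exactly when its last coordinate vanishes.

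Second, I would locate the level where $\vec e_n$ first interacts with the order. Since the convex series terminates at $C_{m+1}=\{0\}$ and $\vec e_n\neq 0$, some pairing $\langle\vec v_i,\vec e_n\rangle=(v_i)_n$ is nonzero; let $k$ be the least such index. Then $\vec v_0,\dots,\vec v_{k-1}$ all have vanishing last coordinate and are fixed by $L_n$, and one checks that $L_n$ preserves the convex subgroup $C_k$ (it fixes the functionals defining it) and fixes the order on $\mathbb Z^n\setminus C_k$; that is, $L_n$ preserves and fixes the first $k$ components of $P$. Using the decomposition $A(P)=\big(P\cap(\mathbb Z^n\setminus C_k)\big)\amalg A(P\cap C_k)$ — precisely the situation of equation (\ref{equation2}), and the argument of Lemma \ref{dico}(i), which applies verbatim to this external action — the problem reduces to showing that $\mathrm{Orb}_{L_n}(P^\ast)$ is discrete in $\mathcal{LO}(C_k)$, where $P^\ast:=P\cap C_k$. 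The gain is that in $C_k$ the vector $\vec e_n$ is now primary (its pairing with $\vec v_k$ is nonzero), while the restricted action still has the transvection form $\vec w\mapsto\vec w+\langle\vec a,\vec w'\rangle\,\vec e_n$ that fixes $\vec e_n$ and is trivial modulo $\vec e_n$ (here $\vec w'$ denotes the first $n-1$ coordinates).

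Third, I would prove discreteness in $C_k$ by a limit argument. Suppose it fails: then there are distinct $A_j\in L_n$ with $A_j(P^\ast)\to P^\ast$; necessarily the parameters $\vec a_j$ escape to infinity modulo the subgroup acting trivially on $C_k$, so after passing to a subsequence they have a limiting direction $\hat d$ for which the functional $\lambda(\vec w)=\langle\hat d,\vec w'\rangle$ is nonzero on $C_k$ yet vanishes on $\vec e_n$. Computing $\langle (A_j^{-1})^t\vec v_k,\vec w\rangle=\langle\vec v_k,\vec w\rangle-(v_k)_n\langle\vec a_j,\vec w'\rangle$ shows that the limit order $Q=\lim_j A_j(P^\ast)$ has primary functional proportional to $\lambda$, so that $\vec e_n$ lies in the kernel of $Q$'s primary functional and is not primary in $Q$. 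The crucial contrast is that every orbit point keeps $\vec e_n$ primary (its primary vector still pairs nontrivially with $\vec e_n$), so $Q$ cannot lie in the orbit. To turn this into a contradiction on a single fixed element, I would pick $\vec w_0\in C_k$ with $\lambda(\vec w_0)\neq 0$ and examine the family $\vec w_0+t\vec e_n$: the $Q$-sign is constant in $t$ (since $\lambda$ ignores the $\vec e_n$-coordinate), whereas the $P^\ast$-sign is governed by $\langle\vec v_k,\vec w_0\rangle+t\,(v_k)_n$ and hence takes both values. Choosing $t^\dagger$ so the two signs disagree produces an element $\vec w^\dagger$ on which $A_j(P^\ast)$ eventually disagrees with $P^\ast$, contradicting convergence.

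I expect the main obstacle to be the bookkeeping inside the limit argument: one must verify that the escaping direction $\hat d$ is genuinely detected by $C_k$ (that is, $\lambda\not\equiv 0$ on $C_k$), which requires passing to the quotient of $L_n$ by the elements acting trivially on $C_k$ before extracting a limiting direction, and one must argue cleanly from the prodiscrete (pointwise) definition of convergence in $\mathcal{LO}$ that the ``primary sign'' description of $Q$ is valid. Once discreteness of $\mathrm{Orb}_{L_n}(P^\ast)$ in $\mathcal{LO}(C_k)$ is established, Lemma \ref{dico}(i) gives that $P$ is not condensed, and since $P$ was arbitrary, Proposition \ref{condpoint} yields smoothness of the induced equivalence relation.
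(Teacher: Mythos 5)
Your proposal is correct, and it reaches Proposition \ref{matrixact} by a genuinely different route in the key step. Both arguments share the same skeleton: Osin's criterion (Proposition \ref{condpoint}), plus the decomposition $A(P)=\bigl(P\cap(\mathbb{Z}^n\setminus C_k)\bigr)\amalg A(P\cap C_k)$ and the transfer argument of Lemma \ref{dico}(i), which the paper itself also applies to this external action. The difference is in how isolation of $P^\ast$ in its orbit is proved. The paper runs an induction on $n$: when $(v_0)_n=0$ it restricts to $C_1$, realizes the restricted action as a representation $L_n\to L_k$, and invokes the inductive hypothesis; when $(v_0)_n\neq 0$ it uses the convex-cone Lemma \ref{lemacono}, applied to the sign-truncated matrices with parameter $\vec b\in\{-1,0,1\}^{n-1}$, to build an \emph{explicit finite} family of test vectors $\vec z_b$ whose positivity defines a concrete neighborhood isolating $P$ --- a constructive and uniform certificate. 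You avoid both the induction and the cone lemma: you descend in one step to the first level $C_k$ at which $\vec e_n$ pairs nontrivially with the defining functional, and prove isolation there by contradiction, extracting a limiting direction $\hat d$ from an escaping sequence of parameters and producing a single test family $\vec w_0+t\vec e_n$ on which the signs of $A_j(P^\ast)$ and $P^\ast$ must eventually disagree, contradicting pointwise (prodiscrete) convergence. Your argument is softer and shorter, at the cost of being non-constructive; the two delicate points you flag do work out, since $\mathrm{span}(C_k')$ is a \emph{rational} subspace of $\mathbb{R}^{n-1}$, so the transformations induced on $C_k$ are parametrized by the discrete orthogonal projection of $\mathbb{Z}^{n-1}$ onto it (boundedness of these projections would give only finitely many orbit points, none equal to $P^\ast$, which cannot converge to $P^\ast$), and your concrete sign computation bypasses any need to identify a ``primary functional of the limit order.'' Two small precision points to incorporate when writing this up: the negation of isolation must be taken as $A_j(P^\ast)\neq P^\ast$, not merely that the matrices $A_j$ are distinct (infinitely many distinct matrices can induce the same transformation of $C_k$, and the escape-to-infinity step genuinely needs the orders to differ from $P^\ast$); and since the sign of $(v_k)_n$ is not normalized, the final choice of $t^\dagger$ should be made according to $\mathrm{sgn}\bigl((v_k)_n\bigr)$ (or one first normalizes $(v_k)_n>0$ using $-I_n$, as the paper does for $x_n$), which is routine.
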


\begin{rem}
In \cite{Poulin}, A. Poulin considers the more general case of $GL_n(\mathbb{Z})$ acting on $\mathcal{LO}(\mathbb{Z}^n)$, showing that this action is never smooth for $n\geq 2$ and, in fact, not hyperfinite for $n\geq 3$.   
\end{rem}

Before we prove Proposition \ref{matrixact}, we need to develop a previous lemma.

\begin{lem}
\label{lemacono}
    Let $\vec{v}=(x_1,\dots,x_n)$ be a vector in $\mathbb{R}^n$, where $x_n>0$, and $A$ a matrix in $L_n\setminus\{I_n\}$.
    Then there exists an integer vector $\vec{w}\in \mathbb{Z}^n$ such that $\langle \vec{v} , \vec{w} \rangle>0$ but $\langle (A^{-1})^t(\vec{v}) , \vec{w} \rangle<0.$
\end{lem}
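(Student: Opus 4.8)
The plan is to make the linear map $(A^{-1})^t$ explicit and then reinterpret the conclusion as the assertion that a suitable open cone in $\mathbb{R}^n$ contains a lattice point.

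First I would record the shape of $(A^{-1})^t$. Writing $A\in L_n\setminus\{I_n\}$ with bottom row $\vec{a}=(a_1,\dots,a_{n-1})\neq\vec{0}$, one inverts and transposes to find that $(A^{-1})^t$ fixes the last coordinate and sends $x_i\mapsto x_i-a_ix_n$ for $i<n$. Hence, setting $\vec{a}^{*}:=(a_1,\dots,a_{n-1},0)$, the transformed vector is
\[
\vec{v}\,'\ :=\ (A^{-1})^t(\vec{v})\ =\ \vec{v}-x_n\,\vec{a}^{*},
\]
whose last coordinate is again $x_n>0$. Thus $\langle\vec{v}\,',\vec{w}\rangle=\langle\vec{v},\vec{w}\rangle-x_n\langle\vec{a}^{*},\vec{w}\rangle$, and what we must produce is $\vec{w}\in\mathbb{Z}^n$ with $\langle\vec{v},\vec{w}\rangle>0$ and $\langle\vec{v}\,',\vec{w}\rangle<0$.

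The key observation is that $\vec{v}$ and $\vec{v}\,'$ are linearly independent: a relation $\alpha\vec{v}+\beta\vec{v}\,'=\vec{0}$ forces $\alpha+\beta=0$ by comparing last coordinates (this uses $x_n\neq0$), and then $\alpha x_n\vec{a}^{*}=\vec{0}$ forces $\alpha=0$ because $\vec{a}^{*}\neq\vec{0}$ (this uses $A\neq I_n$). Consequently the set
\[
C\ :=\ \{\,\vec{w}\in\mathbb{R}^n\ :\ \langle\vec{v},\vec{w}\rangle>0,\ \langle\vec{v}\,',\vec{w}\rangle<0\,\}
\]
is a nonempty open convex cone: it is open as an intersection of two open half-spaces, it is a cone since both defining inequalities are homogeneous, and it is nonempty because the map $\vec{w}\mapsto(\langle\vec{v},\vec{w}\rangle,\langle\vec{v}\,',\vec{w}\rangle)$ is surjective onto $\mathbb{R}^2$ (its two component functionals being linearly independent), so some $\vec{w}$ maps to $(1,-1)$.

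It then remains to produce an integer point in $C$, which is where density of the rationals does the work. I would pick any $\vec{p}\in C$; openness gives a ball around $\vec{p}$ contained in $C$, density of $\mathbb{Q}^n$ gives a rational point $\vec{q}$ in that ball, and clearing denominators yields $N\vec{q}\in\mathbb{Z}^n$ for some positive integer $N$. Since $C$ is a cone and $N>0$, the point $N\vec{q}$ still lies in $C$ and is the desired $\vec{w}$. The only genuinely substantive point is guaranteeing that $C$ is nonempty, i.e. the linear-independence step above (this is exactly where the hypotheses $x_n>0$ and $A\neq I_n$ are consumed); the passage from a nonempty open cone to a lattice point inside it is routine.
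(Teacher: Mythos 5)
Your proposal is correct, and every step checks out: the computation $(A^{-1})^t(\vec{v})=\vec{v}-x_n\vec{a}^{*}$ is right, the linear independence of $\vec{v}$ and $\vec{v}\,'$ follows exactly as you say from $x_n\neq 0$ and $\vec{a}\neq\vec{0}$, surjectivity of $\vec{w}\mapsto(\langle\vec{v},\vec{w}\rangle,\langle\vec{v}\,',\vec{w}\rangle)$ gives nonemptiness of the open cone $C$, and the rational-density-plus-clearing-denominators step produces the lattice point. However, your route is genuinely different from the paper's. The paper never isolates the solution region $C$ as an intersection of two half-spaces; instead it builds explicit generators $\vec{w}_i=\mathrm{sgn}(a_i)\,(\vec{e}_i-\tfrac{x_i}{x_n}\vec{e}_n)$ of the hyperplane $H_v=\vec{v}^{\perp}$, forms the simplicial cone spanned by $\vec{w}_1,\dots,\vec{w}_{i_0},A(\vec{w}_{i_0}),\dots,\vec{w}_{n-1}$ (which straddles $H_v$ and $H_{(A^{-1})^t v}$), and locates an integer point in its interior by a scaling argument analogous to yours. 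Your argument is shorter and more transparent, and it makes visible exactly where the hypotheses $x_n>0$ and $A\neq I_n$ enter. What the paper's heavier construction buys is uniformity that is consumed later: in case (ii) of the proof of Proposition \ref{matrixact}, the author needs a \emph{single} finite family of test vectors $\vec{z}_b$, indexed by sign patterns $\vec{b}\in\{-1,0,1\}^{n-1}$, such that $\vec{z}_b$ simultaneously certifies $\vec{z}_b\in P\setminus A(P)$ for \emph{every} $A$ whose bottom row has sign pattern $\vec{b}$; this is justified there by appealing to the computation (\ref{igual2}) inside the cone, i.e., to the internal structure of the paper's proof rather than to the lemma's statement. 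Your witness depends on $A$ in an uncontrolled way, so if your proof were substituted, that later step would need reworking --- for instance by strengthening the lemma to assert that, for $B$ with entries in $\{-1,0,1\}$ matching the signs of $\vec{a}$, any $\vec{w}$ chosen in a suitable subcone for $B$ also works for $A$ (equivalently, arranging $b_i w_i\geq 0$ coordinatewise so that $\langle\vec{a}^{*},\vec{w}\rangle\geq\langle\vec{b}^{*},\vec{w}\rangle$). As a proof of the lemma exactly as stated, though, yours is complete and arguably cleaner.
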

\begin{proof}
Let us denote by $H_{v}$ the orthogonal complement of $\langle\ \vec{v}\ \rangle$ and define $\vec{w_i}:= \text{sgn}(a_i)\cdot(\vec{e_i}-\frac{x_i}{x_n}\vec{e}_n)$ for all $i=1,\dots,n-1$. 
Here, we use the convention sgn$(0)=1$. 

The reader can verify that $\{\vec{w}_1,\dots, \vec{w}_{n-1}\}$ is a $\mathbb{R}$-linearly independent set and, therefore, $H_v=\langle\ \vec{w}_1,\dots, \vec{w}_{n-1}\ \rangle$.
Moreover, given that $\langle\vec{v},\vec{x}\rangle=\langle A^t(A^{-1})^t(\vec{v}),\vec{x}\rangle =\langle (A^{-1})^t(\vec{v}),A(\vec{x})\rangle$ for all $\vec{x}\in \mathbb{R}^n$, we also conclude that $H_{(A^{-1})^tv}=\langle A(\vec{w}_1),\dots, A(\vec{w}_{n-1})\rangle.$

A simple calculus shows that $A(\vec{w}_i)=\vec{w}_i+|a_i|\cdot \vec{e_n}$. Thus $\langle \vec{v},A(\vec{w}_i)\rangle=\langle\vec{v},\vec{w}_i\rangle +|a_i|\langle \vec{v},\vec{e_n}\rangle=|a_i|x_n\geq 0$ for all indices $i$.
Of course, we have that $\langle\vec{v},A(\vec{w}_i)\rangle>0$ when $a_i\neq 0.$

Consider now $i_0$ the smallest index $i$ such that $a_i\neq 0$ and fix the vector $A(\vec{w}_{i_0})=\vec{w}_{i_0}+|a_{i_0}|\cdot\vec{e}_n$. We also consider the convex cone $C$ generated by $\vec{w}_1,\dots, \vec{w}_{i_0},A(\vec{w}_{i_0}),\dots, \vec{w}_{n-1}$. Recall that $C=\{\ \sum_{i=1}^{n-1}\alpha_i\cdot\vec{w}_i  + \beta_{i_0}A(\vec{w}_{i_0})\ |\ \alpha_{i},\beta_{i_0}\in \mathbb{R}_{\geq 0}\ \}$.

 To conclude with our proof, we need the following:\\ 

    \noindent\textit{Claim.} If $B=\{\vec{v}_1,\dots, \vec{v}_n\}$ is a basis of the vector space $\mathbb{R}^n$ and $C$ the convex cone generated by the same elements, then Int$(C)\cap \mathbb{Z}^n\neq \emptyset$.\\

    \noindent\textit{Proof of Claim.} Let us consider the convex cone $C_0$ generated by the canonical basis $B_0$. We also consider the change-of-basis matrix $A$ of $B_0$ to $B$. The reader can note that $A: \mathbb{R}^n\rightarrow \mathbb{R}^n$ is a homeomorphism such that $A(C_0)=C$. 
    
    Due to Int$(C_0)$ being trivially nonempty and $A$ being a homeomorphism, we conclude that Int$(C)$ is also nonempty. Then there exist $\vec{w}\in C$ and $\epsilon>0$ such that $B(\vec{w},\epsilon)\subseteq C$, where $B(\vec{w},\epsilon)$ denoted the open ball with radius $\epsilon$ (with respect to the Euclidean metric in $\mathbb{R}^n$) and center $\vec{w}$. Moreover, $B(\lambda\vec{w},\lambda\epsilon)=\lambda B(\vec{w},\epsilon)\subseteq C$ for all $\lambda>0$.
    
    Finally, there must exist $\lambda>0$ large enough such that $B(\lambda\vec{w},\lambda\epsilon)\cap \mathbb{Z}^n\neq \emptyset$. $\hfill\square$\\

A quick thought will convince us that $\{\vec{w}_1,\dots, \vec{w}_{i_0},A(\vec{w}_{i_0}),\dots, \vec{w}_{n-1}\}$ is a basis of $\mathbb{R}^n$. From the previous claim, it follows that there exists a integer vector $\vec{z}\in \text{Int}(C)$.

In general, if $\vec{x}=\sum_{i=1}^{n-1}\alpha_i\cdot\vec{w}_i  + \beta_{i_0}A(\vec{w}_{i_0})\in C$, then
\begin{equation}   
\label{igual1}
\langle\vec{v},\vec{x}\rangle=\sum_{i=1}^{n-1}\alpha_{i}\cdot\langle\vec{v},\vec{w}_i \rangle + \beta_{i_0}\cdot \langle \vec{v},A(\vec{w}_{i_0})\rangle=\beta_{i_0}\cdot \langle \vec{v},A(\vec{w}_{i_0})\rangle=\beta_{i_0}x_n\geq 0,
\end{equation}
\begin{equation}
\label{igual2}
\text{and }\ \langle (A^{-1})^t(\vec{v}),\vec{x}\rangle=\sum_{i=1}^{n-1}\alpha_i\cdot\langle (A^{-1})^t(\vec{v}),\vec{w}_i\rangle=\sum_{i=1}^{n-1}\alpha_i\cdot\langle \vec{v},(A^{-1})(\vec{w}_i)\rangle=-\sum_{i=1}^{n-1}\alpha_i|a_{i}|x_n\leq 0.    
\end{equation}

Given that $\vec{z}\in \text{Int}(C)$, then $\beta_{i_0}$ there must be a positive number. In fact, if $\beta_{i_0}=0$, then $\vec{x}\in H_v$ and, therefore,  $\langle\vec{v},\vec{x}\rangle=0$. On the other hand, there exists $\epsilon>0$ such that $\vec{x}-\epsilon\cdot\vec{v}\in C$. Finally, $\langle\vec{v},\vec{x}-\epsilon\cdot\vec{v}\rangle=-\epsilon\cdot\langle\vec{v},\vec{v}\rangle<0$. This inequality contradicts equation (\ref{igual1}).

Similarly, there must exist some index $i$ such that $\alpha_i> 0$ and $a_i\neq 0$. In fact, if $\alpha_i=0$ for all indices $i$ such that $a_i\neq 0$, then $\vec{x}\in A(H_v)=H_{(A^{-1})^tv}$. Moreover, there exists $\epsilon>0$ such that $\vec{x}+\epsilon\cdot(A^{-1})^t(\vec{v})\in C$. Finally, $\langle (A^{-1})^t(\vec{v}), \vec{x}+ \epsilon\cdot(A^{-1})^t(\vec{v})\rangle=\epsilon\cdot\langle (A^{-1})^t(\vec{v}), (A^{-1})^t(\vec{v})\rangle>0$. This inequality contradicts equation (\ref{igual2}).

Finally, we conclude that $\vec{z}$ satisfies $\langle\vec{v},\vec{z}\rangle>0$ and $\langle(A^{-1})^t(\vec{v}),\vec{z}\rangle<0.$ 
\end{proof}

\noindent\textit{Proof of Proposition \ref{matrixact}. } Of course, the sentence is trivial when $n=1$. In fact, $\mathbb{Z}$ admits only two different orders. Let us then suppose that the sentence holds for all $k\leq n-1$, for some $n\geq 1$. We only need to verify that the sentence holds for $n$ as well.

Let $P$ be a positive cone of $\mathbb{Z}^n$. We know that there exist a finite subnormal series
    $$\{1\}=C_{m+1}\vartriangleleft C_m \vartriangleleft\dots\vartriangleleft C_0=\mathbb{Z}^n$$
    and vectors $\vec{v}_0,\vec{v}_1,\dots,\vec{v}_t\in \mathbb{R}^n$ such that $\vec{x}\in P\cap(C_i\setminus C_{i+1})$ if and only if $\vec{x}\in C_i$ and $\langle\vec{v}_i,\vec{x}\rangle>0$.
    
Let us denote $\vec{v}_0=(x_1,\dots, x_n)$. Given this vector, we must consider two possible cases: when $x_n=0$ or $x_n\neq 0$.\\

\noindent\textit{i)} Suppose that $x_n=0$. 
If $A\in L_n$, then we have $(A^{-1})^t(\vec{v}_0)=\vec{v_0}$. Equivalently, $A(P\cap (\mathbb{Z}^n\setminus C_1))= P\cap (\mathbb{Z}^n\setminus C_1)$.
        From previous equality and equation (\ref{equation2}), it follows that 
        $$A(P)=P\cap (\mathbb{Z}^n\setminus C_1)\ \amalg\ A(P\cap C_1),$$
        for all $A\in L_n$.
        This new equation implies that we can restrict the action of $L_n$ to $C_1$. Of course, $C_1\simeq \mathbb{Z}^{k}$, for some $k<n$.

The reader can note that $A(\vec{e}_i) - \vec{e}_i\in \langle\ \vec{e}_n\ \rangle$ for all $i=1,\dots, n-1$, for all $A\in L_n$. On the other hand, $\vec{e}_n\in C_1$ because $\langle\vec{v},\vec{e}_n\rangle=0$. In fact, there must exist a free basis of $C_1$ that contains $\vec{e}_n$.
This implies that the action of $L_n$ restricted to $C_1$ admits a representation in the form $\varphi:L_n\rightarrow L_{k}$.

By the inductive hypothesis, we know that the action $L_k\curvearrowright\mathcal{LO}(\mathbb{Z}^{k})$ is smooth. This means that Orb$_{L_{k}}(P\cap C_1)$ is discrete in $\mathcal{LO}(\mathbb{Z}^{k})$. Due to Orb$_{L_n}(P\cap C_1)\subseteq \text{Orb}_{L_{k}}(P\cap C_1)$, we conclude that Orb$_{L_n}(P\cap C_1)$ is also discrete in $\mathcal{LO}(\mathbb{Z}^{k})$. Finally, by Lemma \ref{dico} we conclude that $P$ is not a condensed point.\\

\noindent\textit{ii)} Suppose that $x_n\neq0$.
We can assume that $x_n>0$ because $-I_n\in GL_{n}(\mathbb{Z})$.

Consider two matrices
$$A=\left ( 
        \begin{array}{cc}
            I_{n-1} & \vec{0}^t \\
            \vec{a} & 1        \end{array}
        \right )
\ \text{ and }\
B=\left ( 
        \begin{array}{cc}
            I_{n-1} & \vec{0}^t \\
            \vec{b} & 1        \end{array}
        \right )        
$$
where $\vec{a}=(a_1,\dots, a_{n-1}),$ $\vec{b}=(b_1,\dots,b_{n-1})$ and $b_i=\left \{ 
        \begin{array}{cc}
            0 & \text{if }a_i=0, \\
            \frac{a_i}{|a_i|} & \text{if } a_i\neq 0 ,
        \end{array}
        \right .$ for all indices $i$.

Let $i_0$ be the smallest index $i$ such that $a_i\neq 0$. From both matrices $A$ and $B$, we define vectors $\vec{w}_i\in \mathbb{R}^n$ as in the proof of Lemma \ref{lemacono}. Note that these vectors coincide for $A$ and $B$ because sgn$(a_i)=\text{sgn}(\frac{a_i}{|a_i|})$ for all $i$. Moreover, we consider $C$, the convex cone generated by $\vec{w}_1,\dots,\vec{w}_{i_0},A(\vec{w}_{i_0}).,\dots, \vec{w}_{n-1}, $ and $\tilde{C}$, the convex cone generated by $\vec{w}_1,\dots,\vec{w}_{i_0},$ $B(\vec{w}_{i_0}),$ $\dots, \vec{w}_{n-1}.$ The reader could also note that $\tilde{C}\subseteq C$.

Thanks to Lemma \ref{lemacono}, we know there exists a vector, which we will denote by $\vec{z}_b$, such that $\langle\vec{v}, \vec{z}_b\rangle>0$ but $\langle(B^{-1})^t(\vec{v}),\vec{z}_b\rangle<0$. Moreover, it is also true that $\langle(A^{-1})^t(\vec{v}),\vec{z}_b\rangle<0$. To verify this claim, it is sufficient to replicate the argument shown below (\ref{igual2}). 

Finally, we conclude that for all $A\in L_{n}\setminus\{I_n\}$ there exists a no null vector $\vec{b}\in \{-1,0,1\}^{n-1}$ such that $\langle\vec{v},\vec{z}_b\rangle>0$, but $\langle (A^{-1})^t(\vec{v}),\vec{z}_b\rangle<0$.
Equivalently, $\vec{z}_b\in P$, but $\vec{z}_b\not\in A(P)$. 

Therefore, we just need to consider the neighborhood 
$$U=\{\ Q\in \mathcal{LO}(\mathbb{Z}^n)\ |\ \vec{z}_b\in Q, \text{for every no null } \vec{b}\in \{-1,0,1\}^{n-1}\ \}$$ 
of $P$ in such a way that Orb$_{L_n}(P)\cap U=\{P\}$. 

We conclude that $P$ is not a condensed point. $\hfill\square$

\subsection{Proof of Theorem \ref{thmsmooth}}

The statement is obvious when $G$ is abelian, since all orders on an abelian left-orderable group are bi-orders and, therefore, a fixed points by the conjugacy action of $G$. Thus, we restrict ourselves to the case where $G$ is a nonabelian nilpotent group.

Suppose $[G,G]=\langle\ g\ \rangle\subseteq Z(G)$ for some element $g\in G$.
We will need the following:\\

        \noindent\textit{Claim. }There exists $g_0$ in $G$ such that $[G,G]\subseteq \langle\ g_0\ \rangle \subseteq Z(G)$ and $G/\langle \ g_0\ \rangle$  is a finitely generated torsion-free abelian group.\\ 

        \noindent\textit{Proof of Claim. } By derived subgroup's property, we know that $G/[G,G]$ is a finitely generated abelian group. This means that $G/[G,G]\simeq \mathbb{Z}^m\times T$ for some $m\geq1$ and a finite group $T$. 
        
        Let $a$ be an element in $G$ such that $a[G,G]\in T$. Then there exists a positive integer $k>1$ such that $a^k[G,G]=(a[G,G])^k=[G,G]$. Equivalently, $a^k\in [G,G]\subseteq Z(G)$. 

        We claim $a\in Z(G)$. In fact, if $h$ is an arbitrary element in $G$, we have $[h,a]\in [G,G]\subseteq Z(G)$. Given that $[h,a]\in Z(G)$, a simple calculus shows that $[h,a]^k=[h,a^k]=1$. Then $[h,a]=1$, because $G$ is torsion-free. Therefore, $a$ must be a central element of $G$. 

        We conclude that $\langle\ a,g\ \rangle \subseteq Z(G)$. Moreover, there exists an integer $t\neq 0$ such that $a^k=g^t$. This equality implies that $\langle\ a,g\ \rangle\simeq\mathbb{Z}$ and, therefore, $\langle \ a,g\ \rangle=\langle\ g_*\ \rangle$ for some $g_*\in Z(G)$. 
        Finally, by Isomorphism Theorems in Group Theory, we conclude that
        $$G/\langle\ g_*\ \rangle \simeq (G/[G,G])/(\langle\ g_*\ \rangle/[G,G])\simeq \mathbb{Z}^m\times T',$$
        where $T'$ is a proper quotient of $T$. 

        If we repeat this process finitely many times, then we will find an element $g_0$ in $G$ which satisfies what we want. $\hfill\square$
        \\ 

Let $\prec$ be a left-invariant order on $G$ and $P$ its associated positive cone. Due to $G$ being nilpotent, $\prec$ is a Conradian order. By Proposition \ref{series}, there exist a finite subnormal series
        \begin{equation}
        \label{serie}
        \{1\}=C_{l+1}\vartriangleleft C_l\vartriangleleft \dots \vartriangleleft C_0=G,    
        \end{equation}
and nondecreasing homomorphisms $f_i:C_i\rightarrow \mathbb{R}$ such that Ker$f_i=C_{i+1}$ for all $i=0,\dots,l$. 

If we consider the first homomorphism $f_0:C_0=G\rightarrow \mathbb{R}$, then it is obvious that $[G,G]\subseteq \text{Ker}f_0=C_1$. 
        Moreover, it must be true that $g_0\in C_1$, because $\mathbb{R}$ is a torsion-free group. 
        
        If $C_1$ is nonabelian, then $\{1\}\subsetneq [C_1,C_1]\subseteq \langle\ g_0\ \rangle$ and $f_1$ is not an injective homomorphism. We can repeat the previous argument for $f_1:C_1\rightarrow\mathbb{R}$ and, in this case, conclude that $[G,G]\subseteq \langle\ g_0\ \rangle\subseteq C_2$. 
        By an inductive process, we can repeat this argument until we find some index $i$ such that $C_i$ is an abelian group. Let us denote by $m\leq l$ this respective index. We conclude that
        $$ \{1\}\vartriangleleft C_m\vartriangleleft\dots \vartriangleleft C_1\vartriangleleft C_0=G,$$
        where $C_m$ is the largest abelian group in (\ref{serie}) that contains the element $g_0$.

        From the previous process, it follows that $[G,G]\subseteq C_i$ and, therefore, $C_i\vartriangleleft G$ for all $0\leq i\leq m$.
        Moreover, the reader can note that the conjugated action of $G$ on $C_i/C_{i+1}$ is trivial for all $0\leq 1\leq m-1$. Therefore, as in equation (\ref{equation2}), we have that
        $$g^{-1}Pg=P\cap (G\setminus C_m)\ \amalg\ g^{-1}(P\cap C_m)g$$
        for all $g\in G$. Previous equation and Lemma \ref{dico} imply that, if we want to verify that $P$ is not condensed, then we only need to verify that $P^*=P\cap C_m$ has a discrete conjugacy orbit. 
        To do this, we first determine the conjugated action of $G$ on $C_m$.

        Due to $G/\langle\ g_0\ \rangle$ being a torsion-free group, $C_m/\langle\ g_0\ \rangle$ is also a torsion-free group. Then there exist elements $a_1,a_2,\dots, a_n=g_0$ in $G$ such that $C_m=\langle\ a_1,a_2,\dots, a_n\ \rangle\ \simeq \mathbb{Z}^n$. More explicitly, the assignment $a_i\rightarrow \vec{e}_i=(0,\dots, 1,\dots,0)$ (with 1 in the $i$-th coordinate) holds and induces a group isomorphism. 

        We must remember that $[G,G]\subseteq \langle\ g_0\ \rangle$. For it, the conjugated action of $G$ restricted to $C_m$ admits a representation with the form $\varphi:G\rightarrow L_n$, such that $g^{-1}hg=\varphi(g)(h)$ for all $g\in G$ and all $h=a_1^{t_1}\dots a_n^{t_n}\in C_m$.

        By Proposition \ref{matrixact}, there exists a neighborhood $U$ of $P^*$ in $\mathcal{LO}(C_m)$ such that Orb$_{L_n}(P^*)\cap U=\{P^*\}$. In particular, Orb$_G(P^*)\cap U=\{P^*\}$ because Orb$_G(P^*)\subseteq \text{Orb}_{L_n}(P^*)$. By Proposition \ref{dico}, this implies that $P$ is not condensed in $\mathcal{LO}(G)$.  

        Finally, by Proposition \ref{condpoint}, the relation $E_{lo}(G)$ is smooth. This finishes our proof of Theorem \ref{thmsmooth}. $\hfill\square$

\begin{rem}
For last time, let us consider the $(2n+1)-$dimensional discrete Heisenberg group $\mathcal{H}_{2n+1}$. Recall that the elements
$$x_i:=\left ( 
\begin{array}{ccc}
1 & \vec{e_i} & 0\\
\vec{0}^t & I_n & \vec{0}^t\\
0 & \vec{0} & 1
\end{array}
\right ), \
y_i:=\left ( 
\begin{array}{ccc}
1 & \vec{0} & 0\\
\vec{0}^t & I_n & \vec{e_i}^t\\
0 & \vec{0} & 1
\end{array}
\right ) \text{ and }
z:=
\left ( 
\begin{array}{ccc}
1 & \vec{0} & 1\\
\vec{0}^t & I_n & \vec{0}^t\\
0 & \vec{0} & 1
\end{array}
\right )
$$
corresponds to a generating set of $\mathcal{H}_{2n+1}$ which satisfy the relations $[x_i,y_i]=z$ for all $i$, while any other commutator is trivial. Thus, it is easy to verify that $[\mathcal{H}_{2n+1},\mathcal{H}_{2n+1}]=Z(\mathcal{H}_{2n+1})=\langle\ z\ \rangle.$ Finally, the group $\mathcal{H}_{2n+1}$ satisfies the hypothesis of Theorem \ref{thmsmooth} and, therefore, $E_{lo}(\mathcal{H}_{2n+1})$ is smooth for all $n\geq 1.$
\end{rem}

\subsection{A nilpotent group with no smooth conjugacy action}

Let us denote by $\mathcal{N}$ the group defined by presentation 
$$\mathcal{N}=\langle\ b_1,b_2,a_1,a_2,a_3\ |\ [a_3,b_1]=a_1,\ [a_3,b_2]=a_2,\ \text{any other commutator is trivial}\ \rangle.$$

For this group, it is easy to check that $\mathcal{N}\simeq\langle\ b_1,b_2\ \rangle\prescript{}{_{A,B}}{\ltimes}\langle\ a_1,a_2,a_3\ \rangle,$ where $\langle\ b_1,b_2\ \rangle\simeq\mathbb{Z}^2$, $\langle\ a_1,a_2,a_3\ \rangle\simeq\mathbb{Z}^3$ and
$$A=\left ( 
\begin{array}{ccc}
    1 & 0 & 1 \\
    0 &  1 & 0 \\
    0 & 0 & 1
\end{array}
\right )
\ \text{ and }\ 
B=\left ( 
\begin{array}{ccc}
    1 & 0 & 0 \\
    0 &  1 & 1 \\
    0 & 0 & 1
\end{array}
\right ).$$

On the other hand, $[\mathcal{N},\mathcal{N}]=\langle\ a_1,a_2\ \rangle\subseteq Z(\mathcal{N})$. 

Note that the group $\mathcal{N}$ does not satisfy the hypothesis of Theorem \ref{thmsmooth}. In fact, the conjugacy orbit relation $E_{lo}(\mathcal{N})$ is not smooth. 
To prove this claim, we need to state a preliminary result.

\begin{lem}
\label{lemavector}
    Let $G$ be a group and $\varphi:G\rightarrow SL_{3}(\mathbb{Z})$ a group homomorphism. Let $\{\alpha, \beta,\gamma\}$ be  a $\mathbb{Z}$-linearly independent set and $P$ the positive cone of $\mathbb{Z}^3$ defined uniquely by $\vec{v}=(\alpha,\beta, \gamma)\in \mathbb{R}^3.$ If there exist $n,m\in \mathbb{Z}\setminus\{0\}$ and elements $g,h\in G$ such that 
    $$\varphi(g)=
\left ( \begin{array}{ccc}
1 & 0 & n\\
0 & 1 & 0\\
0 & 0 & 1
\end{array}
\right )
\text{ y }
\varphi(h)= 
\left ( \begin{array}{ccc}
1 & 0 & 0\\
0 & 1 & m\\
0 & 0 & 1
\end{array}
\right ),$$
then for all neighborhoods $U$ of $P$ there exists an element $k\in \langle\ g,h\ \rangle $ such that $\varphi(k)(P)\in U,$ but $\varphi(k)(P)\neq P$
\end{lem}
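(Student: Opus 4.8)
The plan is to reduce everything to an explicit computation inside $SL_3(\mathbb{Z})$, combined with a Kronecker-type density argument, and then to translate closeness of defining vectors into membership in a basic neighborhood of $\mathcal{LO}(\mathbb{Z}^3)$ by means of Proposition \ref{vectors}. First I would record that $\varphi(g)$ and $\varphi(h)$ commute: both are unipotent with a single nonzero off-diagonal entry in the third column (positions $(1,3)$ and $(2,3)$), and a direct multiplication shows $\varphi(g)\varphi(h)=\varphi(h)\varphi(g)$. Since $\varphi$ is a homomorphism, $\langle\varphi(g),\varphi(h)\rangle=\{\varphi(g^ph^q):p,q\in\mathbb{Z}\}$, and the same elementary computation gives
$$\varphi(g^ph^q)=\begin{pmatrix}1&0&pn\\0&1&qm\\0&0&1\end{pmatrix}.$$
Writing $A_{p,q}$ for this matrix, I would compute $(A_{p,q}^{-1})^t$ and apply Proposition \ref{vectors}: since $P$ is defined by the single vector $\vec v=(\alpha,\beta,\gamma)$, the cone $\varphi(g^ph^q)(P)=A_{p,q}(P)$ is defined by the single vector
$$\vec u_{p,q}=(A_{p,q}^{-1})^t(\vec v)=(\alpha,\ \beta,\ \gamma-pn\alpha-qm\beta).$$
Its coordinates inherit $\mathbb{Z}$-linear independence from those of $\vec v$, so $\vec u_{p,q}$ genuinely defines a single-vector positive cone.

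Next I would use the hypotheses $n,m\neq 0$ together with the $\mathbb{Z}$-linear independence of $\{\alpha,\beta,\gamma\}$ (hence of $\{\alpha,\beta\}$) to show that the subgroup $S=n\alpha\,\mathbb{Z}+m\beta\,\mathbb{Z}$ of $(\mathbb{R},+)$ is dense. Indeed $(n\alpha)/(m\beta)$ is irrational, since a rational relation would contradict the independence of $\{\alpha,\beta\}$ as $n,m\neq 0$, and a subgroup of $\mathbb{R}$ generated by two reals with irrational ratio is dense. Consequently the set of third coordinates $\{\gamma-s:s\in S\}$ is dense in $\mathbb{R}$, so there exist pairs $(p,q)\neq(0,0)$ with $pn\alpha+qm\beta$ arbitrarily small in absolute value yet nonzero; that is, $\vec u_{p,q}$ can be made arbitrarily close to $\vec v$ while differing from it only in the third coordinate.

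Finally I would translate this into the topology of $\mathcal{LO}(\mathbb{Z}^3)$. A basic neighborhood $U$ of $P$ is determined by finitely many lattice points $w_1,\dots,w_r$ with $\langle\vec v,w_i\rangle>0$; set $\delta=\min_i\langle\vec v,w_i\rangle>0$ and $W=\max_i|(w_i)_3|$. Since $\vec u_{p,q}$ and $\vec v$ agree in the first two coordinates, $\langle\vec u_{p,q},w_i\rangle-\langle\vec v,w_i\rangle=-(pn\alpha+qm\beta)(w_i)_3$ is bounded in absolute value by $|pn\alpha+qm\beta|\,W$; choosing $(p,q)\neq(0,0)$ (via density) with $|pn\alpha+qm\beta|<\delta/W$ forces $\langle\vec u_{p,q},w_i\rangle>0$ for every $i$, so $w_1,\dots,w_r\in\varphi(g^ph^q)(P)$ and $\varphi(k)(P)\in U$ with $k=g^ph^q$. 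Moreover $\vec u_{p,q}$ is not a positive multiple of $\vec v$ (equality of the first coordinates with $\alpha\neq 0$ would force the scalar to be $1$, hence equal third coordinates, contradicting $pn\alpha+qm\beta\neq 0$), so the two vectors define distinct orders and $\varphi(k)(P)\neq P$.

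I expect the main obstacle to be exactly this last translation: passing from proximity of the defining vectors to membership in the combinatorially defined basic open set, while simultaneously certifying that the order has actually changed. The resolution rests on the observation that the conjugation action moves only the third coordinate of the defining vector, so a finite family of strict inequalities $\langle\vec v,w_i\rangle>0$ is stable under a sufficiently small perturbation, and on the remark that non-proportional vectors with $\mathbb{Z}$-linearly independent coordinates determine distinct positive cones.
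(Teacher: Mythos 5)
Your proof is correct and follows essentially the same route as the paper: both perturb the single defining vector of $P$ in its third coordinate by an arbitrarily small nonzero amount, obtained from the density of the subgroup of $\mathbb{R}$ generated by (integer multiples of) $\alpha$ and $\beta$, and both exploit that a basic neighborhood of $P$ imposes only finitely many strict inequalities, which survive such a perturbation. The only cosmetic differences are your explicit bound $\delta/W$ in place of the paper's unnamed $\epsilon$ (just guard the trivial case $W=0$, where the perturbation changes nothing and any $(p,q)\neq(0,0)$ works) and your two-sided bound on $|pn\alpha+qm\beta|$ where the paper arranges a one-sided positive perturbation.
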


\begin{proof}
    Let us consider some neighborhood $U$ of $P$ in $\mathcal{LO}(\mathbb{Z}^3)$. This means that there exist elements $\vec{u}_1,\dots,\vec{u}_l\in \mathbb{Z}^3$ such that $U=\{Q\in \mathcal{LO}(\mathbb{Z}^3)\ :\ \vec{u}_1,\dots,\vec{u}_l\in Q\}$. In particular, $\langle\vec{v}, \vec{u}_i\rangle>0$ for all $i=1,\dots, l.$ 

    Due to $U$ being determined by finitely many elements, there must exist $\epsilon>0$ such that $\langle\vec{v} + \delta\vec{e},\vec{u}_i\rangle=\langle\vec{v}, \vec{u}_i\rangle+\delta\cdot\langle\vec{e}, \vec{u_i}\rangle>0$ for all $i=1,\dots,l$; for all $0\leq \delta<\epsilon.$ Here, $\vec{e}=(0,0,1)$. 
    On the other hand, there must exist no null integers $k_0$ and $t_0$ such that $0<-(k_0\alpha + t_0\beta)<\frac{\epsilon}{nm}$. This claim is true because $\langle\ \alpha,\beta\ \rangle$ is a dense subgroup of $\mathbb{R}$. 
    
    We then can consider $k=g^{k_0m}h^{t_0n}$ and the positive cone $\varphi(k)(P)$, which it is defined uniquely by the vector

    $$[\varphi(k)^{-1}]^t(v)=
    \left (
\begin{array}{ccc}
    1 & 0 & 0 \\
    0 & 1 & 0 \\
    -k_0nm & -t_0nm & 1  \\
\end{array}
\right )
\left (
\begin{array}{c}
     \alpha  \\
     \beta \\
     \gamma
\end{array}
\right ) = 
\vec{v} - nm(k_0\alpha + t_0\beta)\vec{e}.$$

A fast thought let us convince ourself that $\varphi(k)(P)\neq P,$ given that $n_0\neq 0$ and $m_0\neq 0$. Moreover, 
$\langle[\varphi(k)^{-1}]^t(\vec{v}), \vec{u}_i\rangle>0$ for all $i=1,\dots, l$. 

Finally, we conclude that $\varphi(k)(P)\in U\setminus\{P\}.$
This finishes the proof of Lemma 1.
\end{proof}

\begin{rem}
Note that if $\alpha, \beta$ and $\gamma$ are elements $\mathbb{Z}$-linearly independent, then there does not exist no null element in $\mathbb{Z}^3$ perpendicular to $\vec{v}=(\alpha,\beta,\gamma)$. Therefore, the positive cone $P$ in Lemma 1 is well defined and there does not exist a proper convex subgroup relative to $P$ in $\mathbb{Z}^3.$
\end{rem}

\begin{prop}
\label{newgroup}
    The relation $E_{io}(\mathcal{N})$ is not smooth.
\end{prop}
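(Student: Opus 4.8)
The plan is to exhibit a single condensed point in $\mathcal{LO}(\mathcal{N})$ and invoke Proposition \ref{condpoint}. Here $E_{io}(\mathcal{N})$ is the conjugacy orbit relation $E_{lo}(\mathcal{N})$ of the preceding discussion, so by Proposition \ref{condpoint} it suffices to produce a positive cone that is a limit point of its own conjugacy orbit. (Since the conjugacy orbit of any point is contained in its $\mathrm{Aut}(\mathcal{N})$-orbit and $\mathrm{Aut}(\mathcal{N})$ is countable because $\mathcal{N}$ is finitely generated, the cone produced below remains a condensed point for the larger action as well; thus the argument does not depend on which group of homeomorphisms one takes.) The decisive accumulation will be supplied by Lemma \ref{lemavector}, with the passage from the normal subgroup to all of $\mathcal{N}$ handled by Lemma \ref{dico}.

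Write $N=\langle\ a_1,a_2,a_3\ \rangle\simeq\mathbb{Z}^3$, a normal subgroup with $\mathcal{N}/N\simeq\langle\ b_1,b_2\ \rangle\simeq\mathbb{Z}^2$. First I would fix a positive cone $P^*$ on $N$ defined uniquely by a single vector $\vec{v}=(\alpha,\beta,\gamma)$ with $\{\alpha,\beta,\gamma\}$ linearly independent over $\mathbb{Q}$, say $\vec{v}=(1,\sqrt{2},\sqrt{3})$. By the Remark following Lemma \ref{lemavector} such a $P^*$ is well defined and has no proper convex subgroup; moreover $\alpha/\beta$ is irrational, so $\alpha\mathbb{Z}+\beta\mathbb{Z}$ is dense in $\mathbb{R}$, which is exactly the density hypothesis used inside the proof of Lemma \ref{lemavector}. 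I would then extend $P^*$ to a positive cone $P$ on all of $\mathcal{N}$ lexicographically along $1\to N\to\mathcal{N}\overset{p}\to\mathbb{Z}^2\to 1$: choose any order on $\mathbb{Z}^2$ and declare an element positive according first to its image under $p$ and, on $N$, according to $P^*$. This makes $N$ a convex subgroup and yields $P\cap N=P^*$.

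The core observation I would establish is that the conjugation action of $\mathcal{N}$ on $\mathcal{N}/N$ is trivial: for $g,h\in\mathcal{N}$ one has $g^{-1}hg=h\,[h,g]$ with $[h,g]\in[\mathcal{N},\mathcal{N}]=\langle\ a_1,a_2\ \rangle\subseteq N$. Consequently each convex subgroup of $P$ lying between $\mathcal{N}$ and $N$ is the $p$-preimage of a subgroup of the abelian group $\mathcal{N}/N$, hence normal in $\mathcal{N}$, and $\mathcal{N}$ acts trivially on every successive quotient $C_i/C_{i+1}$ above $N$. Therefore $\mathcal{N}$ preserves and fixes all the components of $P$ lying strictly above $N$, so Lemma \ref{dico} applies with $C_m=N$ and $P^*=P\cap C_m$; it then remains only to check that $P^*$ is a limit point of $\mathrm{Orb}_{\mathcal{N}}(P^*)$ in $\mathcal{LO}(\mathbb{Z}^3)$.

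This last point is precisely Lemma \ref{lemavector}. The restricted conjugation action of $\mathcal{N}$ on $N\simeq\mathbb{Z}^3$ is a homomorphism $\varphi:\mathcal{N}\to SL_3(\mathbb{Z})$ sending $b_1,b_2$ to the defining matrices $A,B$ (or their inverses, according to the conjugation convention); in every case $\varphi(b_1),\varphi(b_2)$ have exactly the shape required by Lemma \ref{lemavector}, with $g=b_1$, $h=b_2$ and $n,m=\pm1\neq0$, and the dual transpose action on defining vectors is already absorbed inside the proof of that lemma via Proposition \ref{vectors}. Hence for every neighborhood $U^*$ of $P^*$ there is $k\in\langle\ b_1,b_2\ \rangle\leq\mathcal{N}$ with $\varphi(k)(P^*)=k^{-1}P^*k\in U^*\setminus\{P^*\}$, so $P^*$ is a limit of its conjugacy orbit. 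By Lemma \ref{dico}(ii), $P$ is condensed in $\mathcal{LO}(\mathcal{N})$, and by Proposition \ref{condpoint} the relation $E_{io}(\mathcal{N})$ is not smooth. I expect the only delicate step to be the bookkeeping confirming the hypotheses of Lemma \ref{dico} — namely that triviality of conjugation on $\mathcal{N}/N$ forces the intermediate convex subgroups to be normal and the components above $N$ to be fixed — since the essential accumulation is already packaged in Lemma \ref{lemavector}.
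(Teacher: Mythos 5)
Your proposal is correct and takes essentially the same route as the paper's proof: the same series $\{1\}\vartriangleleft\langle\, a_1,a_2,a_3\,\rangle\vartriangleleft\mathcal{N}$, a lexicographic cone built over a positive cone on $\mathbb{Z}^3$ defined by a single vector with $\mathbb{Z}$-linearly independent entries (the paper takes $(\sqrt{2},\sqrt{3},1)$, you take $(1,\sqrt{2},\sqrt{3})$, which is equally valid since $\langle\,1,\sqrt{2}\,\rangle$ is dense in $\mathbb{R}$), the same representation $\varphi(b_1)=A$, $\varphi(b_2)=B$ feeding Lemma \ref{lemavector}, and the same conclusion via Lemma \ref{dico} and Proposition \ref{condpoint}. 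Your extra bookkeeping verifying that the intermediate convex subgroups above $N$ are normal with trivially acted-on quotients only makes the appeal to Lemma \ref{dico} more explicit than in the paper, and the aside about $\mathrm{Aut}(\mathcal{N})$-orbits is harmless but unnecessary.
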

\begin{proof}
    Thanks to Proposition \ref{condpoint}, we only need to verify that the space $\mathcal{LO}(\mathcal{N})$ admits a condensed point. 
Consider then the subnormal series 
$$\{1\}\vartriangleleft\langle\ a_1,a_2,a_3\ \rangle \vartriangleleft \mathcal{N}.$$

Let us now provide an arbitrary order to $\mathcal{N}/\langle\ a_1,a_2,a_3\ \rangle\simeq \langle\ b_1,b_2\ \rangle$ and establish the assignation $a_1\rightarrow (1,0,0)$, $a_2\rightarrow (0,1,0)$, and $a_3\rightarrow (0,0,1)$. We provide the order $P^*$ to $\langle\ a_1,a_2,a_3\ \rangle$, where $P^*$ is uniquely defined by the vector $\vec{v}=(\sqrt{2},\sqrt{3},1) \in \mathbb{R}^3$. Finally, we denote by $P$ the lexicographic order on $\mathcal{N}$ defined by the previous subnormal series and the order on every factor. 

Note that conjugation action of $\mathcal{N}$ on $\mathcal{N}/\langle\ a_1,a_2,a_3\ \rangle$ is trivial. Inspired by equation (\ref{equation2}), we have that
\begin{equation}
\label{clever}
    g^{-1}Pg=P\cap (\mathcal{N}\setminus \langle\ a_1,a_2,a_3\ \rangle)\ \amalg\ g^{-1}P^*g,
\end{equation}
for all $g\in \mathcal{N}$. 

We will check that $P^*$ is a limit point of its conjugacy orbit with respect to $\mathcal{N}$. 

From group presentation of $\mathcal{N}$, conjugation action on $\langle\ a_1,a_2,a_3\ \rangle$ is given by representation $\varphi:\mathcal{N}\rightarrow SL_3(\mathbb{Z})$, where
$$\varphi(b_1)=
\left ( \begin{array}{ccc}
1 & 0 & 1\\
0 & 1 & 0\\
0 & 0 & 1
\end{array}
\right ),\hspace{0.5cm} 
\varphi(b_2)= 
\left ( \begin{array}{ccc}
1 & 0 & 0\\
0 & 1 & 1\\
0 & 0 & 1
\end{array}
\right )$$
and any other generator is mapped to identity matrix.

This situation satisfies the hypothesis of Lemma \ref{lemavector}. Therefore, for every neighborhood $U^*$ of $P^*$ there exists some element $g_0\in \langle\ b_1,b_2\ \rangle$ such that $\varphi(g_0)(P^*)\in U^*\setminus\{P^*\}$. Finally, we conclude that $P$ is a condensed point due to Lemma \ref{dico}.
\end{proof}

\subsection{The case of lower triangular matrices groups}

Of course, Proposition \ref{newgroup} gives us a proof of Theorem \ref{nosmooth}. However, we would like to finish this work presenting an infinitely family of nilpotent groups such that the conjugacy orbit equivalence relation on its space of orders is not smooth.

Let $k$ be a positive integer and consider the group of $k\times k$ lower triangular matrices over $\mathbb{Z}$, this means, 
\begin{equation}
\label{presentation}
N_k:=\left \{ \left ( 
\begin{array}{ccccc}
    1 & 0 & \dotsm & 0 & 0 \\
    a_{2,1} & 1 & \dotsm & 0 & 0\\ 
    \vdots & \vdots & \ddots & \vdots & \vdots\\
    a_{k-1,1} & a_{k_1,2} & \dotsm & 1 & 0\\
    a_{k,1} & a_{k,2} & \dotsm & a_{k,k-1} & 1
\end{array}\right )\ :\ a_{i,j}\in \mathbb{Z},\ \forall\ 1\leq j<i\leq k\  \right \}.
\end{equation}

From this definition, it follows immediately that $N_2$ and $N_3$ are groups isomorphic to $\mathbb{Z}$ and $\mathcal{H}_3$, respectively. Moreover, $N_1$ is nothing more than the trivial group. 

Let us denote by $e_{i,j}$ the matrix that it has 1 in its $(i,j)$-coordinate and $0$ in any other. Let us also define the matrices $E_{i,j}:=I_k+ e_{i,j}$ for all $1\leq j<i\leq k$. The reader can verify that $\{E_{i,j}\}_{1\leq j<i\leq k}$ corresponds to a generating set of $N_k$. Moreover, this set of generators satisfies $[E_{i,j},E_{j,l}]=E_{i,l}$ for $1\leq l<j<i\leq k$, while any other commutator is trivial. From this preferred generating system it follows that $N_k$ is a nilpotent group of class $k-1$.

In fact, if we denote by 
$$\{1\}=\gamma_k\leq \gamma_{k-1}\leq \dots \leq \gamma_1\leq \gamma_0=N_k$$
the lower central series of $N_k$, then $\gamma_l=\langle\ E_{i,j}\ :\ i-j>l \ \rangle$ for all $1\leq l<k$. Moreover, we know that $\gamma_i/\gamma_{i+1}\simeq \mathbb{Z}^{k-(i+1)} $ is a finitely generated abelian group for all $0\leq i \leq k-1$. 
We conclude that $N_k$ is a left-orderable group because it admits a subnormal series whose factors are left-orderable groups.

Of course, we would like to prove that $E_{lo}(N_K)$ is not smooth when $k\geq 4$. However, we will first study the case where $k=4$. For visual convenience, we write
$$N_4:=\left \{  \left (
        \begin{array}{cccc}
            1 & 0 & 0 & 0 \\
            e & 1 & 0 & 0 \\
            a & f & 1 & 0 \\
            c & b & d & 1 
        \end{array}
        \right )
        :  a,b,c,d,e,f \in \mathbb{Z} 
        \right \}.$$

and denote by $a,b,c,d,e$ and $f$ the generating matrices with $1$ at the corresponding position. We take this notation from Jorquera, Navas and Rivas' work \cite{Rivas}.

As before, the generating elements satisfy the relations $[d,f]=b$, $[d,a]=c$, $[f,e]=a$ and $[b,e]=c$, respectively. Meanwhile, every other commutator is trivial. The previous claim shows that $N_4\simeq \langle\ e,d\ \rangle \ltimes \langle\ a,b,c,f\ \rangle$, where $\langle\ e,d\ \rangle\simeq \mathbb{Z}^2$ and $\langle\ a,b,c,f\ \rangle\simeq \mathbb{Z}^4$.

\begin{rem}
Consider the subgroup $N=\langle\ a,b,c,d,f\ \rangle\leq N_4$.
The reader can verify that $N$ is normal in $N_4$, because $[f,e]=a$ and $[b,e]=c$.  Moreover, the only nontrivial commutators of generating elements in $N$ are $[d,f]=b$ and $[d,a]=c$. From these relations, it is not hard to see that $N$ is an isomorphic copy of $\mathcal{N}$ in $N_4$. We could conjecture then that we can lift, in some way, a condensed point in $\mathcal{LO}(N)$ to $\mathcal{LO}(N_4)$. Fortunately, Clay and Calderoni have proved a result that allows us to conclude what we want.
\end{rem}

\begin{prop} \cite[Corollary 3.4]{ClayCalderoni3}
{\it For a left-orderable group G, the following are equivalent:
\label{Convex}
\begin{itemize}
    \item[(1)] $E_{lo}(G)$ is smooth.
    \item[(2)] For every relative convex $C\leq G$ the conjugacy orbit equivalence relation $E_{lo}(C)$ is smooth.
\end{itemize} }    
\end{prop}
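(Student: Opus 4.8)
The plan is to reduce the whole statement to Osin's dynamical criterion (Proposition \ref{condpoint}), so that ``smooth'' becomes ``has no condensed point'', and then to observe that conjugation by an element of a \emph{convex} subgroup leaves the order unchanged outside that subgroup. First I would dispose of the implication $(2)\Rightarrow(1)$ as essentially trivial: the whole group $G$ is convex with respect to every left-order, so $G$ is itself a relative convex subgroup of $G$, and applying $(2)$ to the case $C=G$ gives back exactly $(1)$. Thus the only real content is $(1)\Rightarrow(2)$, which I would prove by contraposition: assuming $E_{lo}(C)$ is \emph{not} smooth for some relative convex $C\leq G$, I would manufacture a condensed point of $\mathcal{LO}(G)$, whence $E_{lo}(G)$ is not smooth by Proposition \ref{condpoint}.

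The key step is the following observation. Fix a positive cone $P$ of $G$ for which $C$ is convex, and let $Q:=P\cap C\in\mathcal{LO}(C)$. Convexity forces every $g\in G\setminus C$ to lie entirely above or entirely below $C$, so the sign of $g$ is recorded by the class $gC$ in the $G$-invariant total order that $P$ induces on the coset space $G/C$. For $c\in C$ one has $c^{-1}C=C$, hence the class of $cgc^{-1}$ equals $cgC$, and left-invariance of the coset order gives $C\prec gC\iff C\prec cgC$. Therefore, for $g\in G\setminus C$, $g\in P\iff cgc^{-1}\in P\iff g\in c^{-1}Pc$; that is, $C$-conjugation fixes the restriction of $P$ to $G\setminus C$. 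Meanwhile $(c^{-1}Pc)\cap C=c^{-1}(P\cap C)c$, so on $C$ the action of $c$ on $P$ is precisely the $C$-conjugacy action on $Q$ inside $\mathcal{LO}(C)$.

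With this in hand the construction is short. By the failure of smoothness of $E_{lo}(C)$ and Proposition \ref{condpoint} there is a condensed point $Q\in\mathcal{LO}(C)$. Starting from any order making $C$ convex, I would keep its induced $G$-invariant order on $G/C$ and splice in $Q$ on $C$ lexicographically; this standard extension yields a left-order $P$ of $G$ with $C$ convex and $P\cap C=Q$. Now let $U$ be any basic neighborhood of $P$ in $\mathcal{LO}(G)$, determined by finitely many elements; those lying in $C$ cut out a neighborhood $U^{*}$ of $Q$ in $\mathcal{LO}(C)$, and condensation of $Q$ supplies $c\in C$ with $c^{-1}Qc\in U^{*}\setminus\{Q\}$. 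By the key step, $c^{-1}Pc$ agrees with $P$ on all chosen elements outside $C$ and lands in $U^{*}$ inside $C$, so $c^{-1}Pc\in U\setminus\{P\}$. Hence $P$ is a limit point of its own $C$-conjugacy orbit, which is contained in its $G$-conjugacy orbit, so $P$ is condensed in $\mathcal{LO}(G)$, completing the contrapositive.

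The main obstacle, and the only delicate conceptual point, is the key step of the second paragraph: recognizing that it is conjugation by $C$ (not by all of $G$) that transports condensation, and verifying that such conjugation acts trivially on the part of $P$ outside $C$. Once that is isolated, the lexicographic extension producing $P$ with the prescribed restriction is routine, and the neighborhood-matching argument is a direct transcription of the one already used in the proof of Lemma \ref{dico}.
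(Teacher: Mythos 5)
Your proof is correct, but there is nothing in the paper to compare it against: the paper does not prove this proposition at all, it simply quotes it from Calderoni and Clay \cite[Corollary 3.4]{ClayCalderoni3}. So what you have produced is a self-contained argument where the paper delegates to the literature, and it holds up. The direction $(2)\Rightarrow(1)$ is indeed immediate, since $G$ is convex in itself relative to any left-order. For $(1)\Rightarrow(2)$, your key step --- that conjugation by $c\in C$ fixes the restriction of $P$ to $G\setminus C$ whenever $C$ is convex relative to $P$, because the sign of $g\notin C$ is determined by the coset $gC$ in the induced $G$-invariant order on $G/C$ and $cgc^{-1}C=cgC$ --- is exactly the right observation; it is the generalization of equation (\ref{equation2}) and Lemma \ref{dico} to convex subgroups that need not be normal (in the paper those tools are only established under the stronger hypotheses that the convex subgroups are normal and the action on the quotients is trivial). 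The splice construction you invoke (a left-order on $C$ together with the coset order coming from an order witnessing relative convexity glue to a left-order on $G$ with $C$ convex and prescribed restriction) is standard; the one point you leave implicit is the semigroup verification for the spliced positive cone, which reduces to checking that products of positive elements outside $C$ remain outside $C$ and positive --- this follows from the coset order and convexity, so it is a genuine but routine gap. With that filled in, your neighborhood-matching argument is a faithful transcription of the proof of Lemma \ref{dico}(ii), and by Proposition \ref{condpoint} the contrapositive is complete. In short, your route makes explicit, inside the paper's own framework, the condensation-transfer mechanism that the paper only imports as a black box.
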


It is enough then to note that $N_4$ admits the subnormal series

$$\{1\}\vartriangleleft \langle\ b,c,d\ \rangle \vartriangleleft N\vartriangleleft N_4,$$

where $N_4/N \simeq \mathbb{Z}$. Therefore, $N$ is a convex subgroup of $N_4$ relative to every order determined by previous series.

By Proposition \ref{newgroup} and Proposition \ref{Convex}, we conclude the following:

\begin{coro}
\label{groupN4}
    The relation $E_{lo}(N_4)$ is not smooth.
\end{coro}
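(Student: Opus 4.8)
The plan is to deduce the corollary from the Clay--Calderoni characterization of smoothness (Proposition \ref{Convex}) used in its contrapositive form. That proposition says $E_{lo}(G)$ is smooth if and only if $E_{lo}(C)$ is smooth for \emph{every} relative convex subgroup $C\leq G$; consequently, to prove $E_{lo}(N_4)$ is not smooth it suffices to exhibit a \emph{single} relative convex subgroup of $N_4$ whose own conjugacy orbit relation fails to be smooth. The natural candidate is $N=\langle\ a,b,c,d,f\ \rangle$, already identified in the preceding remark as an isomorphic copy of $\mathcal{N}$ inside $N_4$.

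First I would confirm that $N$ is a relative convex subgroup of $N_4$. For this I would invoke the subnormal series
$$\{1\}\vartriangleleft \langle\ b,c,d\ \rangle \vartriangleleft N\vartriangleleft N_4,$$
whose successive quotients are finitely generated torsion-free abelian groups (in particular $N_4/N\simeq\mathbb{Z}$) and hence left-orderable. As recalled in Section 2.2, any such series induces a lexicographic left-invariant order on $N_4$, and with respect to that order every term of the series is a convex subgroup; in particular $N$ is convex. Thus $N$ is relatively convex in $N_4$, as required to apply Proposition \ref{Convex}.

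Next I would transfer non-smoothness from $\mathcal{N}$ to $N$. Since $N\cong\mathcal{N}$ as groups, the induced pushforward of positive cones is a homeomorphism $\mathcal{LO}(N)\to\mathcal{LO}(\mathcal{N})$ that is equivariant for the respective conjugation actions, so it is a Borel isomorphism between $E_{lo}(N)$ and $E_{lo}(\mathcal{N})$. By Proposition \ref{newgroup}, $E_{lo}(\mathcal{N})$ is not smooth, and smoothness is preserved under such isomorphisms, whence $E_{lo}(N)$ is not smooth either. Applying Proposition \ref{Convex} to $G=N_4$ with the relative convex subgroup $C=N$, condition (2) fails, so condition (1) fails as well, i.e. $E_{lo}(N_4)$ is not smooth.

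I do not expect a genuine obstacle here: the hard content sits entirely in Proposition \ref{newgroup}, and the corollary is assembly. The only load-bearing verification is the relative convexity of $N$, and even that reduces to a point one should check carefully rather than a difficult argument---namely that $N$ is genuinely normal in $N_4$ (guaranteed by the relations $[f,e]=a$ and $[b,e]=c$), so that the displayed series really is subnormal and the lexicographic-order construction of Section 2.2 applies to produce an order in which $N$ is convex.
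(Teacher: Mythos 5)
Your proposal is correct and follows essentially the same route as the paper: both exhibit $N=\langle\ a,b,c,d,f\ \rangle\cong\mathcal{N}$ as a relative convex subgroup of $N_4$ via the subnormal series $\{1\}\vartriangleleft \langle\ b,c,d\ \rangle \vartriangleleft N\vartriangleleft N_4$ and then combine Proposition \ref{newgroup} with Proposition \ref{Convex}. The only difference is that you spell out details the paper leaves implicit (the equivariant homeomorphism transferring non-smoothness from $\mathcal{LO}(\mathcal{N})$ to $\mathcal{LO}(N)$, and the normality checks), which is fine.
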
   

Now, we would like to generalize the previous sentence to groups of higher-dimensional matrices.
Thanks to Corollary \ref{groupN4} and Proposition \ref{Convex}, it is enough to verify that $N_k$ is, in some way, a relative convex subgroup of $N_{k+1}$, for all $k\geq 4$. For it, we consider the canonical injective morphism $\phi:N_k\rightarrow N_{k+1}$, where 
$$B\longrightarrow \phi(B)=\left (
\begin{array}{cc}
    B & \vec{0}^t \\
    \vec{0} & 1
\end{array}
\right )$$
for all matrix $B$ in $N_k$.

Note that $\tilde{N_k}:=\phi(N_k)$ is not normal in $N_{k+1}$. However, and using the notation of generating elements for $N_{k+1}$ below (\ref{presentation}),  we can consider subgroup $A_{i}=\langle\ E_{k+1,1},\dots, E_{k+1,i}, \tilde{N_k}\ \rangle$ for every $1\leq i\leq k$.

The reader can check that 
$$\tilde{N_{k}}\vartriangleleft A_1\vartriangleleft \dots\vartriangleleft A_k=N_{k+1}.$$

Moreover, quotients $A_1/\tilde{N_k}$ and $A_{i+1}/A_i$ are infinite cyclic groups for all $1\leq i\leq k-1$. 

The previous sentence implies that $\tilde{N_k}$ is a convex subgroup of $N_{k+1}$ relative to every lexicographic order which is defined by previous subnormal series. 
Finally, we conclude with an inductive argument that relation $E_{lo}(N_k)$ is not smooth, for all $k\geq 4$.

\begin{coro}
    The relation $E_{lo}(N_k)$ is smooth if and only if $k=1,2,3$.
\end{coro}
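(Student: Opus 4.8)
The plan is to prove the two implications separately, assembling results already established. The forward direction (smoothness for $k=1,2,3$) follows from Theorem \ref{thmsmooth} after identifying each $N_k$ explicitly, while the reverse direction (non-smoothness for $k\geq 4$) is exactly the inductive argument carried out just above, anchored at Corollary \ref{groupN4}.

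For the ``if'' direction I would treat the three small cases by hand. When $k=1$ the group $N_1$ is trivial, so $\mathcal{LO}(N_1)$ is a single point and $E_{lo}(N_1)$ is smooth. When $k=2$ we have $N_2\simeq \mathbb{Z}$, an abelian group; every order is then a bi-order and hence a fixed point of the conjugacy action, so $E_{lo}(N_2)$ is smooth (this is the abelian case opening the proof of Theorem \ref{thmsmooth}). When $k=3$ we have $N_3\simeq \mathcal{H}_3$, and since the relations give $[\mathcal{H}_3,\mathcal{H}_3]=Z(\mathcal{H}_3)=\langle\ z\ \rangle$, the commutator subgroup is infinite cyclic and central. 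Thus $N_3$ satisfies the hypotheses of Theorem \ref{thmsmooth}, and $E_{lo}(N_3)$ is smooth.

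For the ``only if'' direction I would invoke the chain of relative convex subgroups constructed above. Corollary \ref{groupN4} provides the base case, namely that $E_{lo}(N_4)$ is not smooth. The injection $\phi:N_k\hookrightarrow N_{k+1}$ together with the subnormal series $\tilde{N_k}\vartriangleleft A_1\vartriangleleft\dots\vartriangleleft A_k=N_{k+1}$, whose successive quotients are infinite cyclic, exhibits $\tilde{N_k}$ as a convex subgroup of $N_{k+1}$ relative to the associated lexicographic orders. By Proposition \ref{Convex}, if $E_{lo}(N_{k+1})$ were smooth then so would be $E_{lo}(\tilde{N_k})$, which is Borel isomorphic to $E_{lo}(N_k)$ since $\phi$ is an isomorphism onto its image; contrapositively, non-smoothness of $E_{lo}(N_k)$ forces non-smoothness of $E_{lo}(N_{k+1})$. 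An induction starting at $k=4$ then yields non-smoothness for all $k\geq 4$.

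The only genuinely delicate point is the transition between $k=3$ and $k=4$, and it is worth isolating where the dichotomy comes from. For $N_3=\mathcal{H}_3$ the commutator subgroup is cyclic, so Theorem \ref{thmsmooth} applies; but for $N_4$ the relations $[d,f]=b$, $[d,a]=c$, $[f,e]=a$, $[b,e]=c$ give $[N_4,N_4]=\langle\ a,b,c\ \rangle\simeq\mathbb{Z}^3$, which is no longer cyclic, so the hypothesis of Theorem \ref{thmsmooth} fails. The substantive obstruction is then supplied by the copy of $\mathcal{N}$ sitting inside $N_4$ as a relative convex subgroup, whose space of orders carries a condensed point by Proposition \ref{newgroup}. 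Once these structural facts are in place the corollary is a matter of bookkeeping, so I expect no new computational difficulty beyond what the earlier lemmas already handle.
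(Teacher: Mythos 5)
Your proposal is correct and follows essentially the same route as the paper: the small cases $k=1,2,3$ via triviality, abelianness, and Theorem \ref{thmsmooth} applied to $N_3\simeq\mathcal{H}_3$, and the cases $k\geq 4$ by induction from Corollary \ref{groupN4} using the relative convexity of $\tilde{N_k}$ in $N_{k+1}$ together with Proposition \ref{Convex}. Your added observation that $[N_4,N_4]=\langle\ a,b,c\ \rangle\simeq\mathbb{Z}^3$ correctly explains why Theorem \ref{thmsmooth} cannot apply at $k=4$, though as you note this only marks the failure of a sufficient condition, the actual obstruction being the convex copy of $\mathcal{N}$ and Proposition \ref{newgroup}.
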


Emir Molina Taucán

Departamento de Matemáticas, Universidad de Chile

Las Palmeras 3425, Ñuñoa, Santiago, Chile

Email: emir.molina@ug.uchile.cl

\end{document}